\theoremstyle{remark}
\theoremstyle{definition}
\newtheorem{definition}{Definition}[section]
\newtheorem{theorem}[definition]{Theorem}
\newtheorem{lemma}[definition]{Lemma}
\newtheorem{proposition}[definition]{Proposition}
\newtheorem{conjecture}[definition]{Conjecture}
\begin{document}

\title{Polyhedra with hexagonal and triangular faces and  three faces around each vertex}
\author{Linda Green\footnote{University of North Carolina at Chapel Hill, greenl@email.unc.edu}         \and
        Stellen Li\footnote{University of North Carolina at Chapel Hill, stellen@ad.unc.edu}
}

\maketitle
\begin{abstract}
We analyze polyhedra composed of hexagons and triangles with three faces around each vertex, and their 3-regular planar graphs of edges and vertices, which we call ``trihexes''. Trihexes are analogous to fullerenes, which are 3-regular planar graphs whose faces are all hexagons and pentagons. Every trihex can be represented as the quotient of a hexagonal tiling of the plane under a group of isometries generated by $180^\circ$ rotations. Every trihex can also be described with either one or three ``signatures'': triples of numbers $(s, b, f)$ that describe the arrangement of the rotocenters of these  rotations. Simple arithmetic rules relate the three signatures that describe the same trihex. We obtain a bijection between trihexes and equivalence classes of signatures as defined by these rules. Labeling trihexes with signatures allows us to put bounds on the number of trihexes for a given number vertices $v$ in terms of the prime factorization of $v$ and to prove a conjecture concerning trihexes that have no ``belts'' of hexagons. 

\end{abstract}

\section{Introduction}
\label{sec:intro}
Motivated by the study of polyhedra, this paper analyzes 3-regular planar graphs whose faces all have three or six sides. We call these graphs \emph{trihexes}. Trihexes have been analyzed by Deza and Dutour (\cite{deza2005zigzag} and \cite{deza2008geometry}), Gr\"ubaum and Motzkin \cite{grunbaum_motzkin_1963}, and others. We refer to faces with three sides as  ``triangles'' and faces with six sides as ``hexagons'', even though these faces may not have straight edges and  may be unbounded. 

Trihexes are analogous to fullerenes, which are 3-regular planar graphs whose faces all have five or six sides. Fullerenes have received much attention because when viewed as polyhedra, they have physical manifestations as carbon molecules. Fullerenes have been analyzed by Brinkmann, Goedgebeur, and McKay \cite{brinkmann2012generation} and others.

In this paper, Section~\ref{sec:examples} explains how every triple of numbers $(s, b, f)$ with $s \geq 0$, $b \geq 0$, and $0 \leq f \leq s$ (a ``signature'')  describes a unique trihex. The number $s$ gives the number of hexagons that lie in a chain capped by triangles (a ``spine''), $b$ gives the number of rings of hexagons (``belts'') that surround and separate the spines, and $f$ describes the rotation of the two spines relative to each other. Furthermore, every trihex can be described with at least one signature.  Every trihex can also be described as the quotient of a hexagonal tiling of the plane under a group generated by $180^\circ$ rotations, as shown in Section~\ref{sec:grid}. In this context, the signatures $(s, b, f)$ describe the arrangement of the rotocenters of these  rotations. 
 Although there can be three distinct signatures that describe the same trihex, simple arithmetic rules given in Section~\ref{sec:equivalent signatures} relate the signatures that characterize the same trihex. We thus obtain a bijection between trihexes and equivalence classes of signatures as defined by these rules.   In Section~\ref{sec:howMany}, we use our classification of trihexes in terms of signatures  to put bounds on the number of trihexes with $v$ vertices in terms of the prime factorization of $\dfrac{v}{4}$. In Section~\ref{sec:tight} we prove a conjecture about the ``graph of curvatures'' from \cite{deza2005zigzag}.

The results in this paper can be applied to polyhedra whose faces are all triangles and hexagons and have three faces around each vertex, but are not necessarily convex. We will call these polyhedra \emph{trihex polyhedra}. The tetrahedron is a convex trihex polyhedron described by the triple $(0,0,0)$. All other convex trihex polyhedra are described by triples $(s, b, f)$ with $s > 0$, and cannot be described by triples with $s = 0$. Non-convex trihex polyhedra can be described by triples $(s, b, f)$ where $s = 0$ and $b > 0$. The correspondence between trihexes and convex trihex polyhedra follows from Steinitz's theorem \cite{grunbaum1967convex} or \cite{ziegler2012lectures}, as explained in Section~\ref{sec:convexVsNonconvex}.

\section{Definitions and preliminaries}
\label{sec:defs}

\begin{definition}
A \emph{trihex} is a finite, connected, 3-regular planar graph whose faces all have three or six sides. 
\end{definition} 

\begin{definition}
A \emph{polyhedron} is a union of polygons in $R^3$ which is homeomorphic to a sphere.  Any pair of polygons intersect either in the empty set, a vertex, an edge, or a union of vertices and/or edges. 
\end{definition}

\begin{definition}
A \emph{trihex polyhedron} is a polyhedron whose faces all are triangles or hexagons and has three faces around each vertex.
\end{definition}

\begin{definition}
Two polyhedra are \emph{equivalent} if there is an orientation preserving  homeomorphism of the sphere that takes the faces, edges, and vertices of one polyhedron to the faces, edges, and vertices, respectively, of the other. 
\end{definition}

The requirement that the homeomorphism be orientation preserving means that left-handed and right-handed versions of chiral polyhedra are not equivalent. We make the same distinction  for trihexes. By a theorem of Whitney (\cite{whitney19332}, or see \cite{mohar2001graphs})  two planar graphs are isomorphic if and only if there is a homeomorphism of the sphere whose restriction to the planar graph gives a graph isomorphism. We consider trihexes equivalent if and only if an orientation-preserving homeomorphism can be found.

\begin{definition} Two trihexes are \emph{equivalent} if they are not only isomorphic as graphs but if there is also an orientation-preserving homeomorphism of the plane that takes one graph to the other. 
\end{definition}

Deza and Dutour (\cite{deza2005zigzag} and \cite{deza2008geometry}) describe a family of 2-connected trihexes denoted by $G_n$ or $T_n$, where $n$ is half the number of hexagons. We will refer to these trihexes as godseyes, after the woven yarn craft figure that they resemble.

\begin{definition}
A \emph{godseye} is a trihex that consists of two adjacent triangles, surrounded by one or more nested pairs of hexagons, with two more adjacent triangles on the outside. The hexagons in each nested pair meet along opposide sides. See Figure \ref{fig:godseye}. 
\end{definition}

\begin{figure}
    \centering
    \includegraphics[width=4cm, height=4cm]{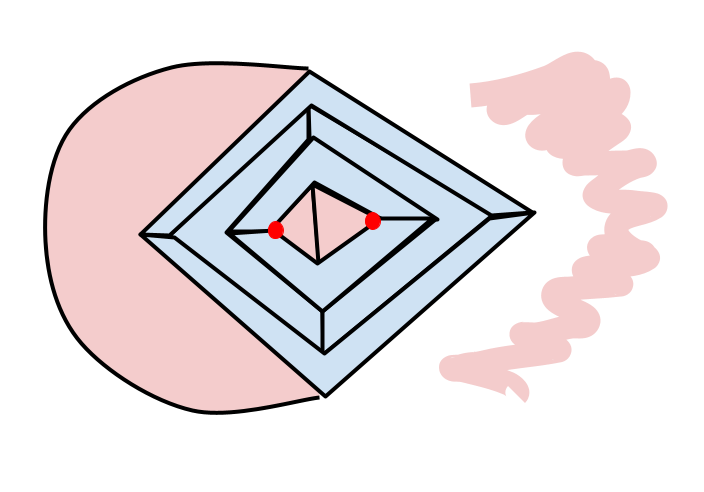}
    \caption{Godseye with three pairs of hexagons. }
    \label{fig:godseye}
\end{figure}

Godseyes can have any even number of hexagons; however, a standard Euler characteristic argument shows that every trihex has exactly four faces. See, for example, \cite{grunbaum_motzkin_1963}. 
The argument is as follows.
Let $f_6$ be the number of hexagons in the trihex and $f_3$ be the number of triangular faces. 
The number of faces is $F = f_6 + f_3$. The number  of edges is $E = \dfrac{6f_6 + 3f_3}{2}$, since each hexagonal face has six edges, each triangular face has three edges, and each edge is shared by two faces. The number of vertices is $V = \dfrac{6f_6 + 3f_3}{3}$, since each hexagonal face has six vertices, each triangular face has three vertices, and each vertex is shared by three faces. By Euler's formula, we have $V - E + F = 2$. Therefore, 
    $\dfrac{6f_6+3f_3}{3}-\dfrac{6f_6+3f_3}{2}+f_6+f_3=2 $, 
 which implies $f_3=4$.

Euler's formula  places no restrictions on the number of hexagonal faces; however, Gr\"unbaum and Motzkin showed that only even numbers of hexagonal faces can be achieved \cite{grunbaum_motzkin_1963}.

\section{Building trihexes from spines and belts}

\label{sec:examples}

In this section, we describe ways to construct trihexes out of  strings of hexagons capped by triangles (``spines''), possibly with rings of hexagons (``belts'') separating the spines. Our construction echoes the construction given by Gr\"unbaum and Motzkin in \cite{grunbaum_motzkin_1963} but adds the consideration of ``offset'' defined below.

\begin{definition}
    A \emph{belt} is a circuit of distinct hexagonal faces in a trihex such that
each hexagon is adjacent to its neighbors on opposite edges  \cite{deza2005zigzag}. 
\end{definition}

\begin{definition}
\label{def:spine}
A \emph{spine} is a collection of distinct faces in a trihex $F_0, F_1, \cdots , F_{s+1}$, with $s \geq 0$, such that
\begin{enumerate}[(i)]
    \item $F_0$ and $F_{s+1}$ are triangles, 
    \item $F_1, F_2, \cdots , F_s$ are hexagons, and
    \item For each hexagon $F_i$, $1 \leq i \leq s$, $F_i$ is adjacent to $F_{i-1}$ and to $F_{i+1}$ along opposite edges of $F_i$.
\end{enumerate}

\begin{figure}
    \centering
    \includegraphics[ height=6cm]{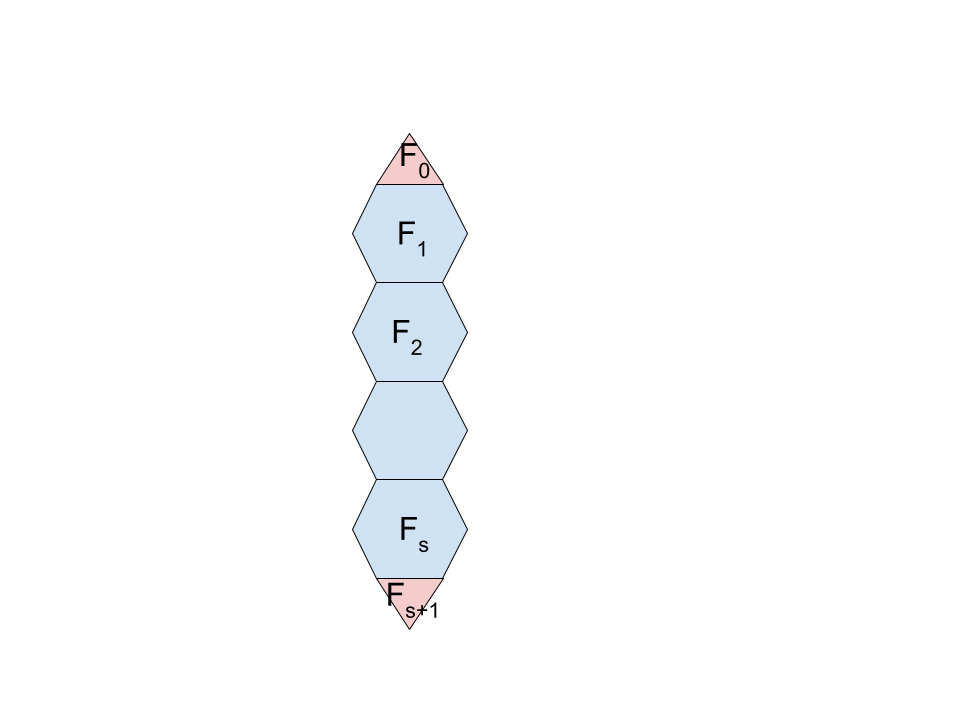}
    \caption{Spine of length 4.}
    \label{fig:spine}
\end{figure}

The \emph{internal edges} of the spine are the edges shared by $F_i$ and $F_{i+1}$ for $0 \leq i \leq s$  and the \emph{external edges} of the spine are all the other edges. The \emph{length} of the spine is the number $s$ of hexagonal faces between the triangular faces. Note that a spine of length 0 is a pair of triangles that share an edge. We refer to the triangle $F_0$  as the \emph{head triangle} of the spine and the triangle $F_{s+1}$ as the \emph{tail triangle}. Note that  which triangle is considered the head triangle and which is considered the tail triangle depends only on the choice of numbering. The \emph{head vertex}  of the spine is the ``tip'' vertex of the head triangle, that is, the vertex that is not on an internal edge. The \emph{tail vertex} of the spine is the ``tip'' vertex of the tail triangle.
\end{definition}

A trihex can be created from two spines of length $s$ by attaching them along the $4s + 4$ external edges in each of their boundaries. This can be done in multiple ways. See Figure~\ref{fig:attachingSpines} for examples with $s = 5$.

\begin{figure}
    \begin{subfigure}{.3\textwidth}
        \centering
    \includegraphics[height=5cm]{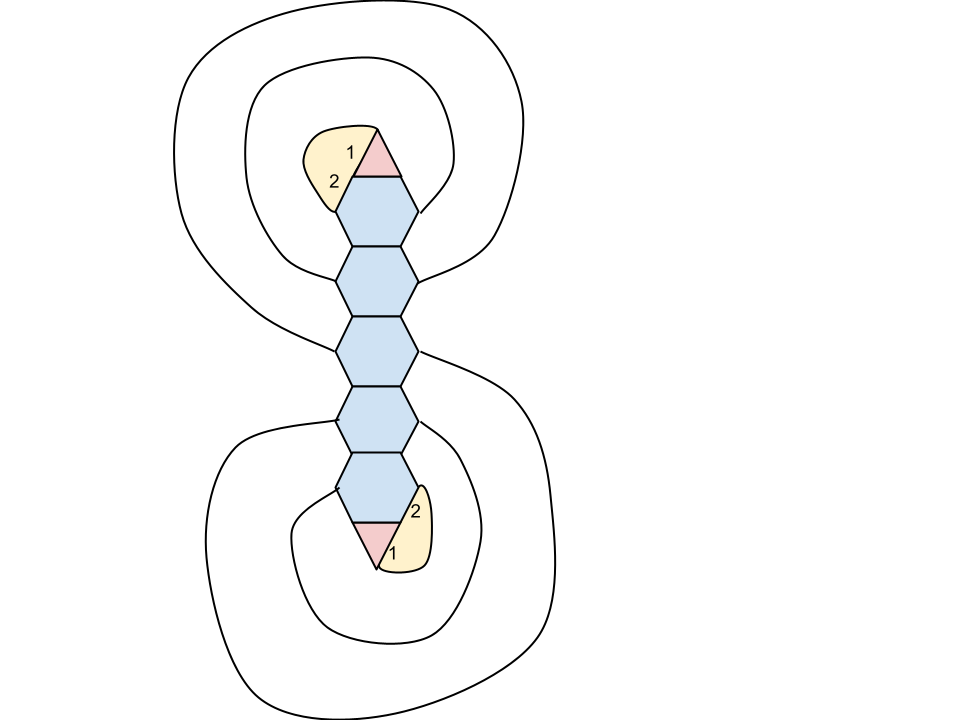}
    \caption{Offset 0}
    \label{fig:offset0_ring0}
    \end{subfigure}
    \begin{subfigure}{.3\textwidth}
        \centering
    \includegraphics[height=5cm]{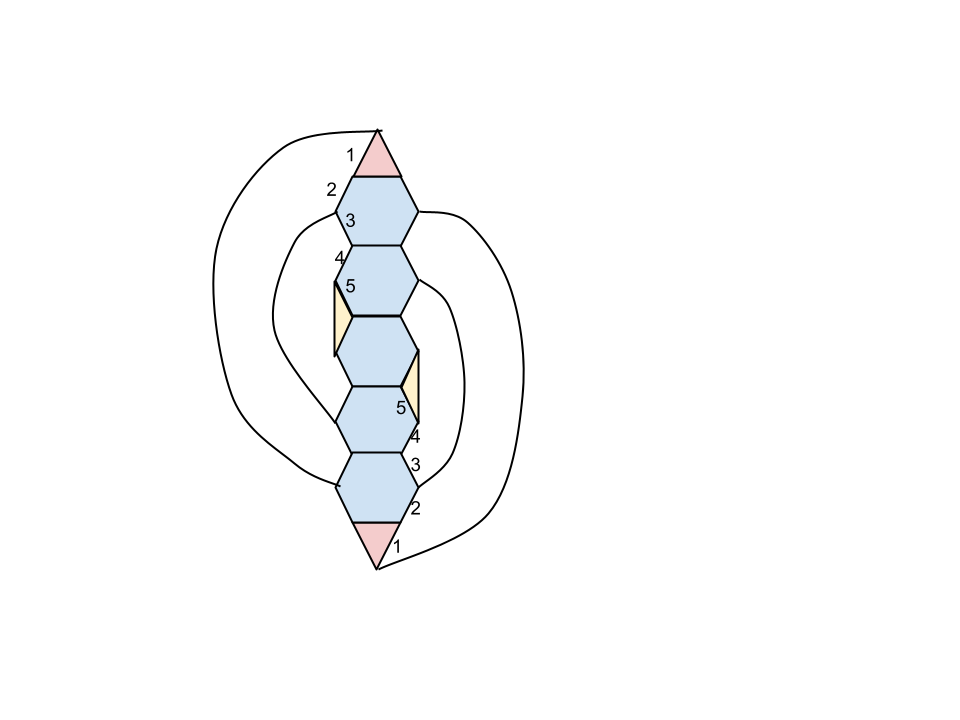}
    \caption{Offset 2}
    \label{fig:offset2_ring0}
    \end{subfigure}
    \begin{subfigure}{.3\textwidth}
        \centering
    \includegraphics[height=5cm]{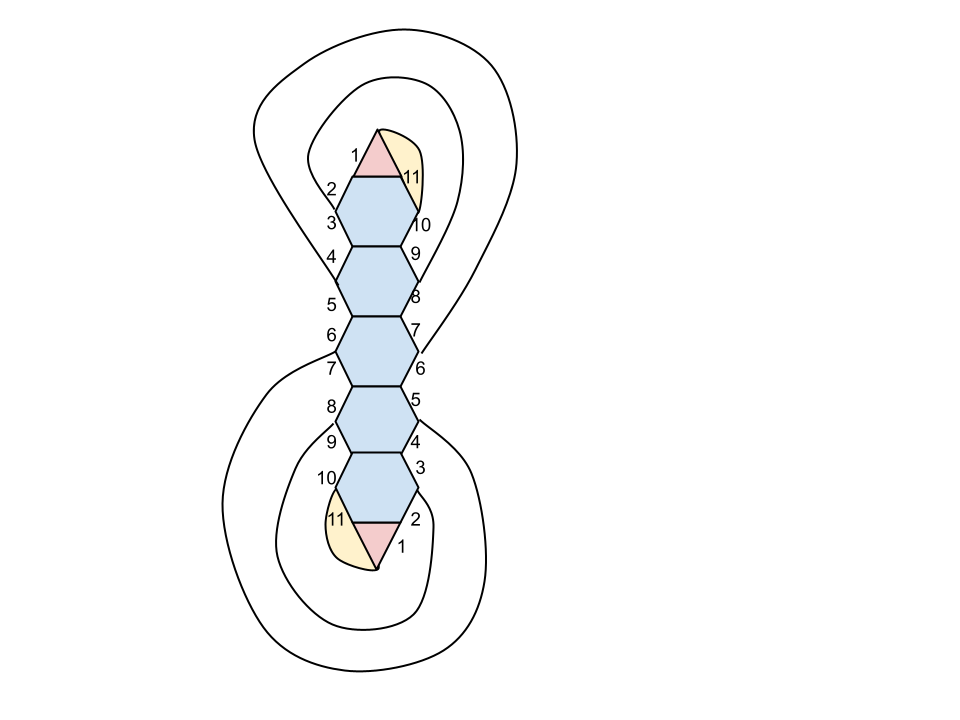}
    \caption{Offset 5}
    \label{fig:offset5_ring0}
    \end{subfigure}
\caption{Attaching spines}
\label{fig:attachingSpines}
\end{figure}

\begin{definition}
Suppose that two spines of length $s$ are identified along their $4s+4$ external  edges. Starting with the head or tail vertex of one spine, travel counterclockwise around the boundary edges of this spine, until either a head or a tail vertex of the other spine is encountered, and count the number of edges traversed.  We say that the two spines are attached with \emph{offset} $i \mod (s+1)$ if the number of edges traversed is $2i + 1$. 
\end{definition}

To see that offset is well-defined, first note that the head vertex (or tail vertex) of the second spine must be identified to a vertex of the first spine where two faces of the first spine already meet. Otherwise, the trihex would not be 3-regular. Therefore, the number of edges traversed between the head or tail vertex of the first spine and a head or tail vertex of the second spine must be an odd number, which has the form $2i+1$ for some number $i$. In addition, since each spine has head and tail vertices that are $2s+2$ edges apart, the number of edges traversed, going counterclockwise, to get from the \emph{head} vertex of the first spine to any head or tail vertex of the second spine will be the same number $\mod (2s+2)$. It will also be the same number $\mod (2s+2)$ as the number of edges traversed to get from the \emph{tail} vertex of the first spine to any head or tail vertex of the second spine. Since $2i + 1 \equiv 2j + 1 \mod(2s+2)$, if and only if $i \equiv j \mod (s+1)$, the  offset is well-defined $\mod(s+1)$, no matter which head and tail vertices are used. It makes no difference which spine is considered the first spine and which is considered the second, since the same paths of edges are traversed whether traveling counterclockwise around one spine or the other, when going between a head or tail vertex of one spine and a head or tail vertex of the other.

For integers $s \geq 0$ and $b > 0$, we can also build a trihex out of a pair of spines of length $s$ together with $b$ belts of $2s+2$ hexagons, where the belts  lie in between the two spines and encircle each spine. See Figure~\ref{fig:trihexExamples1}. 
As before, there are a variety of ways to attach the second spine onto the outermost belt, depending on where the head triangle of the second spine is inserted. Again, these different insertion points can be characterized by offsets.

\begin{figure} 
    \begin{subfigure}{.5\textwidth}
        \centering
    \includegraphics[height = 5 cm]{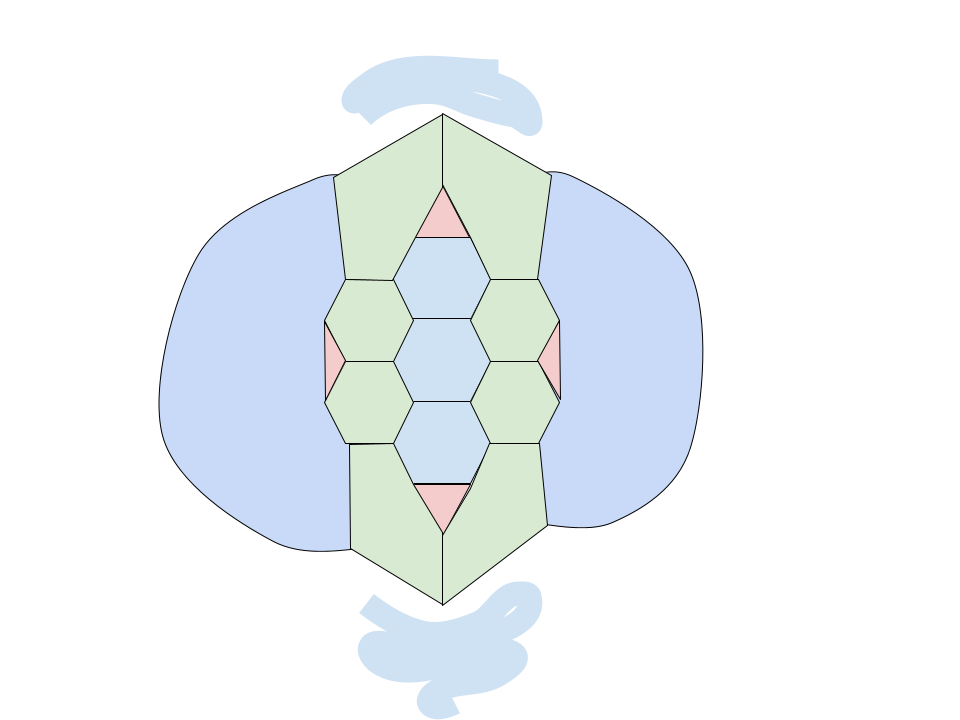}
    \caption{Two spines of length 3 with 1 belt and offset 1.}
    \end{subfigure}
        \begin{subfigure}{.5\textwidth}
        \centering
    \includegraphics[height = 5 cm]{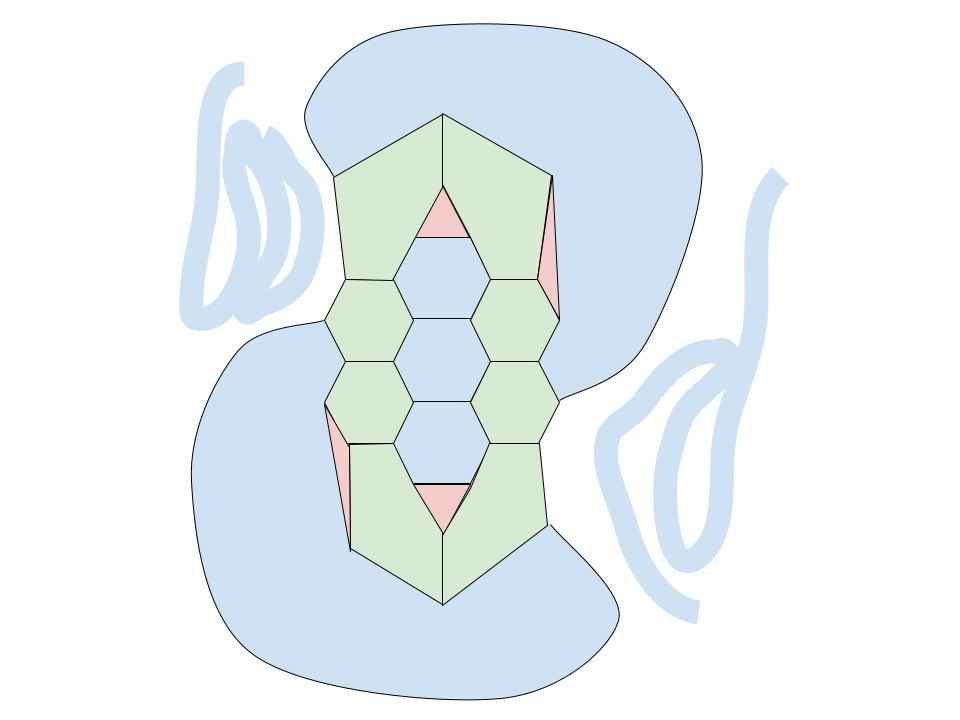}
    \caption{Two spines of length 3 with 1 belt and offset 2.}
   \end{subfigure}
    
    \begin{subfigure}{.6\textwidth}
        \centering
    \includegraphics[height = 5 cm]{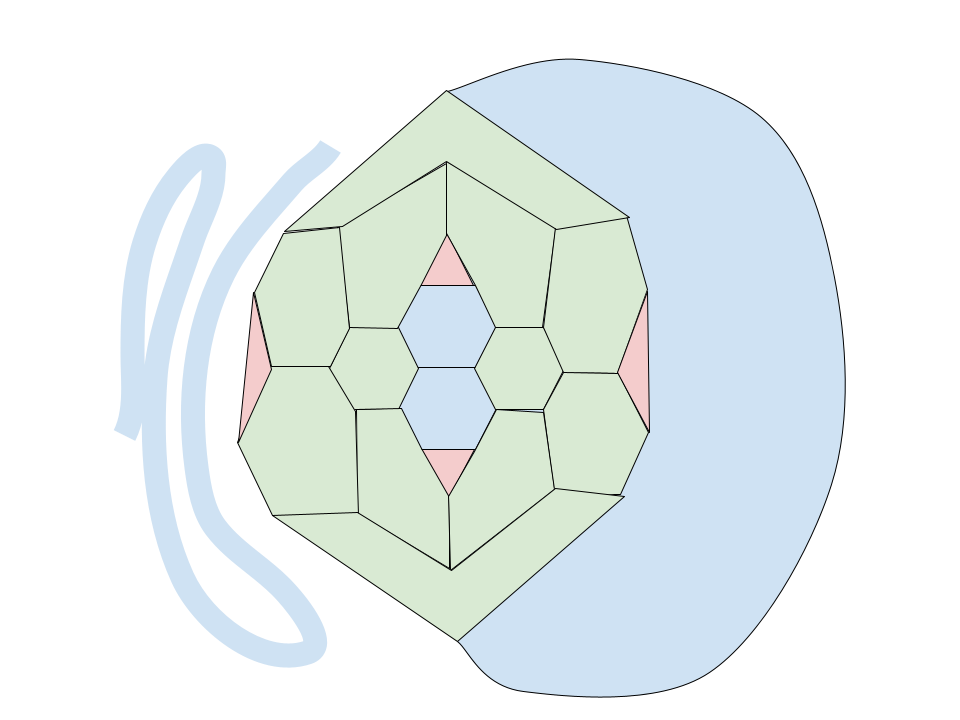}
    \caption{Two spines of length 2 with 2 belts and offset 0.}
    \end{subfigure}
\caption{Spines with belts between them.}
\label{fig:trihexExamples1}
\end{figure}

Suppose first that we have only one belt. Suppose we delete the belt and slide the two spines towards each other to fill in the space. If we slide them straight towards each other, along the edges of the belt that previously separated them, and then shift each spine slightly,  either clockwise or counterclockwise around the other spine, then we form a new trihex with no belts between the spines. See Figure~\ref{fig:shiftingToFindOffset1}. Using the convention that we always shift clockwise, we can define the offset of the original trihex to be the offset of the new trihex with no belts between the spines (after shifting clockwise).

\begin{figure}
    \begin{subfigure}{.5\textwidth}
    \centering
    \includegraphics[height = 5 cm]{trihex3-1-2.png}
    \caption{The original trihex with one belt.}
    \label{fig:trihex3-1-2Before}
    \end{subfigure}
        \begin{subfigure}{.5\textwidth}
        \centering
    \includegraphics[height = 5 cm]{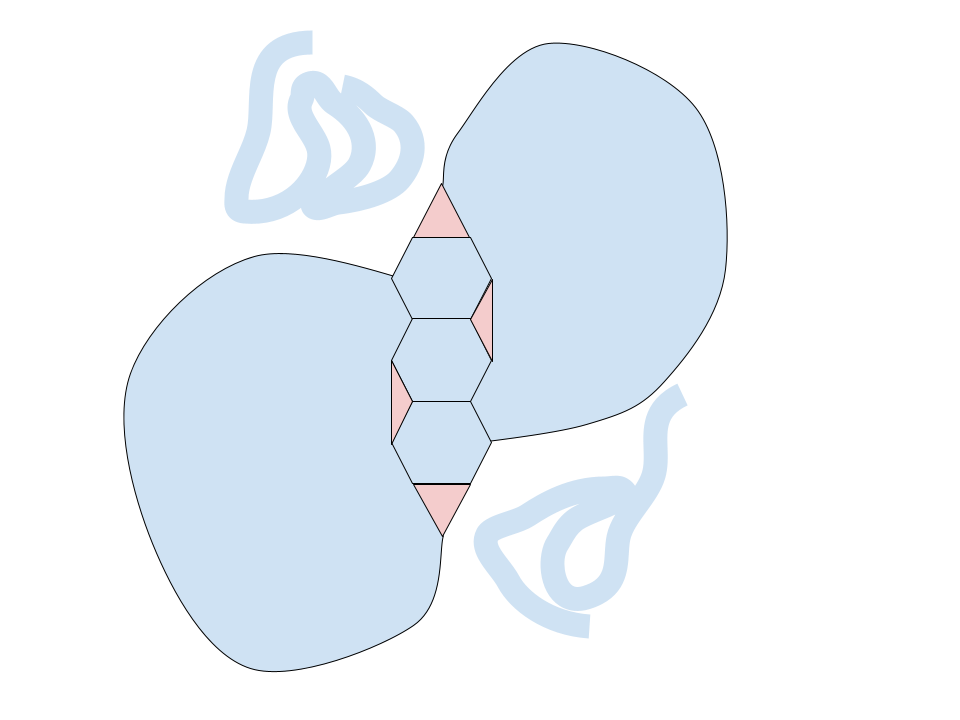}
    \caption{After deleting the belt and shifting clockwise, the convention that we  use. The offset of this trihex is 2, and therefore the offset of the original trihex is also 2.}
    \label{fig:trihex3-1-2AfterClockwiseShift}
    \end{subfigure}
    \begin{subfigure}{.5\textwidth}
        \centering
    \includegraphics[height = 5 cm]{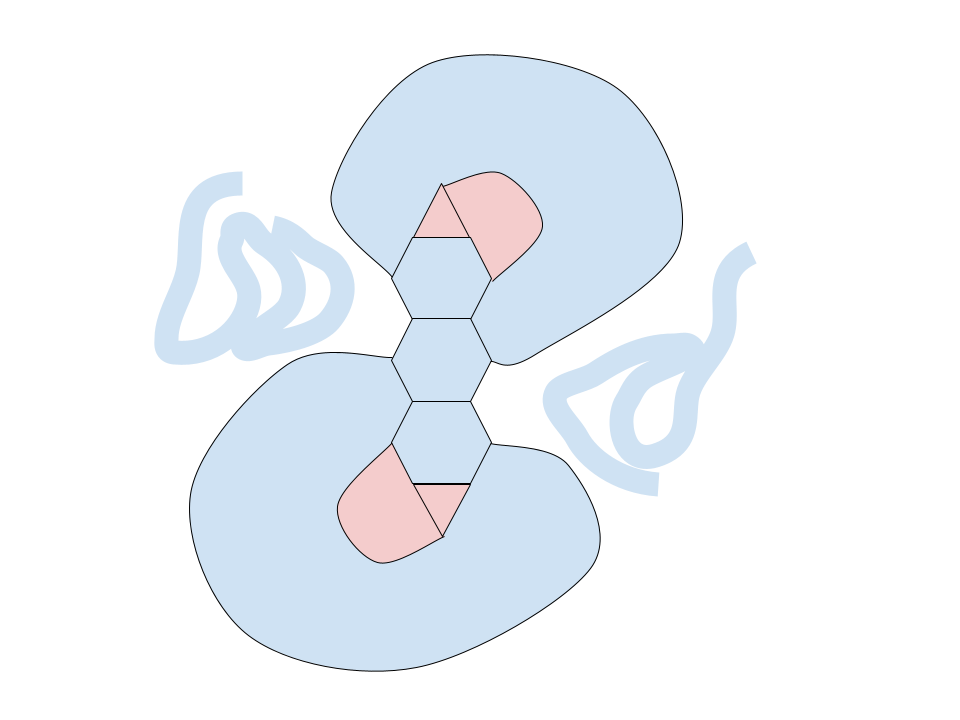}
    \caption{After deleting the belt and shifting counterclockwise.}
    \label{fig:trihex3-1-2AfterCounterclockwiseShift}
    \end{subfigure}

\caption{Shifting clockwise vs. counterclockwise.} \label{fig:shiftingToFindOffset1}
\end{figure}

To find the offset when there are additional belts between the spines, we repeat the process of deleting belts and shifting the remaining pieces, starting from the outermost belt and working in.

\begin{definition}
When there are one or more belts between the spines, successively delete the belts, starting from the outermost belt, each time shifting one spine clockwise around the other spine. The offset of the original trihex is defined to be the offset of  the resulting trihex that has no belts between the spines. 
\end{definition}

Note that shifting one spine clockwise around the other spine gives the same configuration as shifting the other spine clockwise around the first spine. Therefore, offset is well-defined irrespective of which spine is shifted with respect to the other and which belt is considered outermost vs. innermost.

For example, the original trihex in Figure~\ref{fig:shiftingToFindOffset2} has offset 0.

\begin{figure}
    \begin{subfigure}{.5\textwidth}
        \centering
    \includegraphics[height = 5 cm]{trihex2-2-0.png}
    \caption{The original trihex with two belts.}
    \end{subfigure}
    \begin{subfigure}{.5\textwidth}
        \centering
    \includegraphics[height = 5 cm]{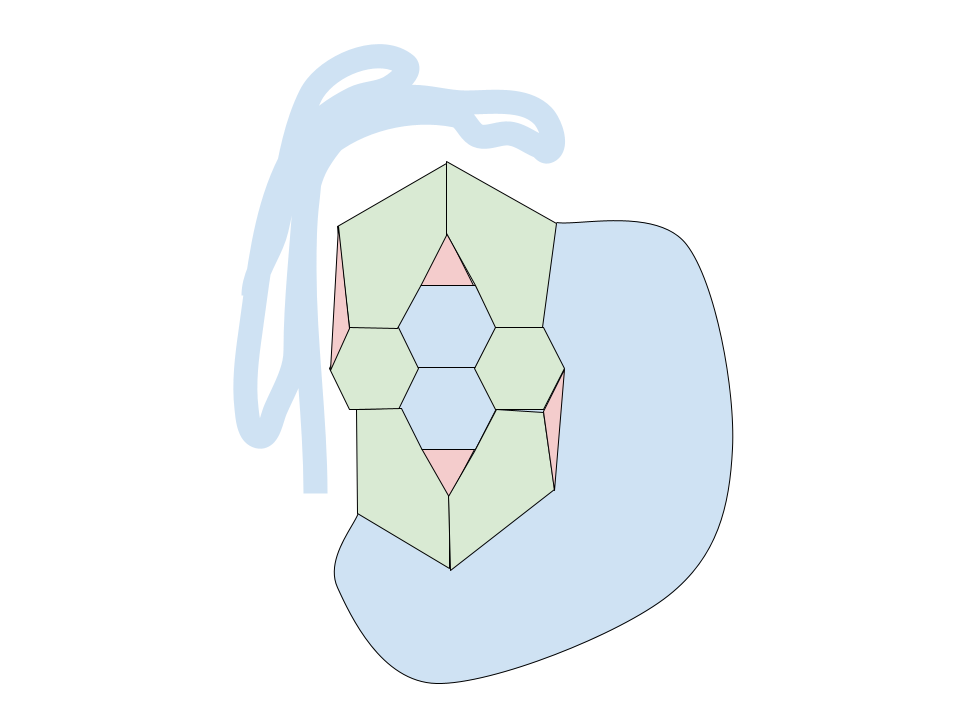}
    \caption{After deleting the one belt and shifting clockwise.}
    \end{subfigure}
    \begin{subfigure}{.5\textwidth}
        \centering
    \includegraphics[height = 3 in, width = 3.7 in]{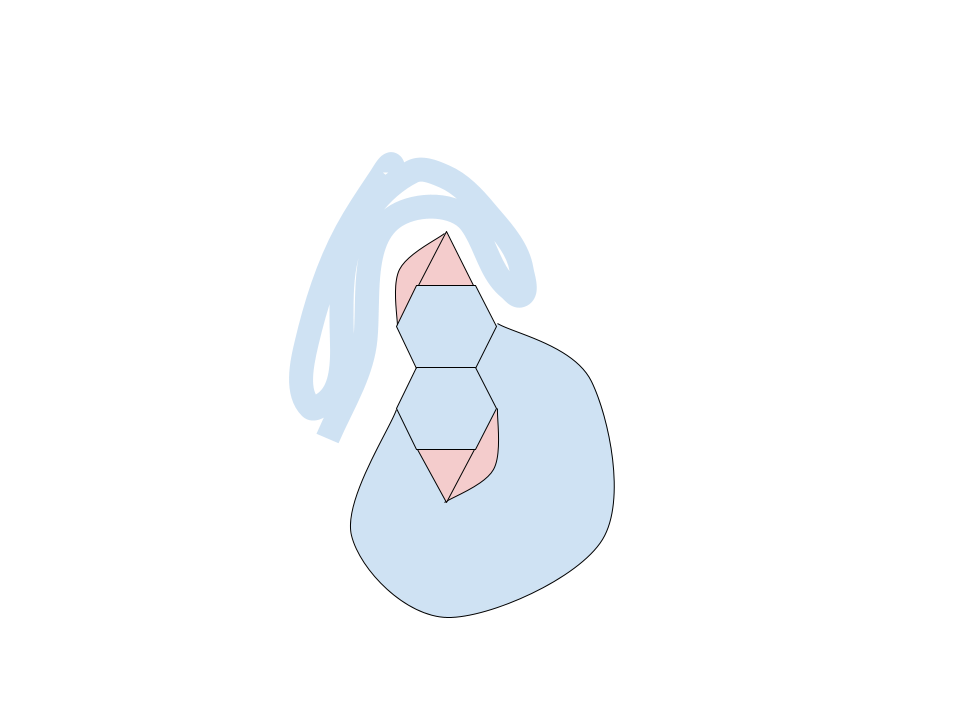}
    \caption{After deleting the the second belt and shifting clockwise again. The offset of this trihex is 0, and so the offset of the original trihex is 0.}
    \end{subfigure}
\caption{Shifting to find offset.}
\label{fig:shiftingToFindOffset2}
\end{figure}

\begin{definition} Let $s \geq 0$, $b \geq 0$, and $0 \leq f \leq s$. If a
 trihex can be formed from two spines of length $s$ with $b$ belts between them and with offset $f$, then the \emph{signature} of the trihex is the ordered triple $(s, b, f)$.
\end{definition}

It is clear from the contruction that any two trihexes built from two spines of length $s$, with $b$ belts between them and offset $f$ are equivalent. In addition, Gr\"unbaum and Motzkin \cite{grunbaum_motzkin_1963} show that any trihex can be decomposed into spines and surrounding belts. Therefore, any trihex can be described with a signature $(s, b, f)$ for some $s \geq 0$, $b \geq 0$, and $0 \leq f \leq s$.

We summarize these facts in the following: 

\begin{theorem}
    \begin{enumerate}[(i)]
    \item Given $s \geq 0$, $b \geq 0$, $0 \leq f \leq s$, there exists a trihex with signature $(s, b, f)$.
    \item Any two trihexes with the same signature are equivalent.
    \item Any trihex can be described with a signature $(s, b, f)$ for some $s \geq 0$, $b \geq 0$, and $0 \leq f \leq s$.
    \end{enumerate}
    \label{thm:achieveAllSigs}
\end{theorem}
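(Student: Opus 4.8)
The plan is to handle the three parts separately: (i) and (ii) follow from analyzing the explicit gluing construction, while (iii) rests on the decomposition result of Gr\"unbaum and Motzkin quoted just above the statement.

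For part (i), I would make the construction precise as a gluing of surfaces-with-boundary. A single spine of length $s$, together with its incident external edges, is topologically a closed disk whose boundary is the cycle of $4s+4$ external edges, and a belt of $2s+2$ hexagons is topologically an annulus. I would form the trihex by gluing two such disks onto the two boundary circles of a stack of $b$ belts (or directly to each other when $b=0$), matching edges according to the chosen offset $f$. The resulting surface is a sphere, so the underlying graph is planar and connected. It then remains to check that every face is a triangle or a hexagon, which is immediate since the only faces are the $2s$ spine hexagons, the $4$ spine triangles, and the $b(2s+2)$ belt hexagons, and that the graph is $3$-regular. The regularity is precisely the content of the offset discussion preceding the theorem: requiring each head or tail vertex to be identified to a point where two faces already meet forces the number of traversed edges to be odd, of the form $2i+1$, which is exactly the condition that no vertex acquires a degree other than $3$. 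For $b=0$ one finds $F=2s+4$, $E=6s+6$, and $V=4s+4$, so $2E=3V$ as required; inserting each belt adds a ring of hexagons together with the edges and vertices that keep $2E=3V$ intact.

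For part (ii), I would argue that the signature $(s,b,f)$ determines the glued surface up to an orientation-preserving homeomorphism carrying faces to faces, edges to edges, and vertices to vertices. The two spines and the $b$ belts are combinatorially rigid, each being a labeled strip or ring of prescribed length, and the offset $f$ fixes the remaining gluing freedom up to the equivalence $\bmod(s+1)$ already shown to be well-defined. Any two trihexes with the same signature therefore admit such a combinatorial bijection; because the offset convention (counterclockwise traversal and clockwise shifting) pins down a consistent orientation, the bijection is realized by an orientation-preserving homeomorphism, so the two trihexes are equivalent in the sense of our definition. For part (iii), I would start from the Gr\"unbaum--Motzkin decomposition of an arbitrary trihex into spines separated by belts. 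Since every trihex has exactly four triangular faces and each spine is capped by exactly two triangles, the decomposition contains exactly two spines, with all belts lying between them. I would then show the two spines share a common length $s$, since a belt adjacent to a spine of length $s$ is a ring of exactly $2s+2$ hexagons, so consistency of each belt with the spine on either side (and, when $b=0$, the matching of the $4s+4$ external edges) forces the two spine lengths to agree. Taking $s$ to be this common length, $b$ the number of belts, and $f\in\{0,\dots,s\}$ the offset defined by the shifting procedure yields a signature for the trihex.

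The main obstacle is part (iii), and within it the claim that the two spines share a length and that the offset extracted from a belted trihex is well-defined independent of choices; establishing equal spine length cleanly requires knowing that a belt meets each neighboring spine in a full ring of $2s+2$ hexagons, which is where the combinatorial structure of belts must be used carefully. Parts (i) and (ii), by contrast, are essentially verifications that the construction behaves as the figures suggest.
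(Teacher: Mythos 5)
Your proposal is correct and follows essentially the same route as the paper, which establishes (i) and (ii) by the explicit spine-and-belt gluing construction described just before the theorem and obtains (iii) from the Gr\"unbaum--Motzkin decomposition of an arbitrary trihex into spines and surrounding belts. You supply more detail than the paper does (Euler-characteristic checks, the equal-spine-length argument), but the underlying strategy is identical.
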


The signature for a trihex is not unique: decomposing a trihex in different ways can produce three different signatures, as detailed in Section~\ref{sec:equivalent signatures}.

\section{Trihexes and hexagonal grid coverings}
\label{sec:grid}

In this section, we create a hexagonal tiling of the plane that covers a given trihex. Doing so allows us to develop rules for finding alternative signatures for a trihex in Section~\ref{sec:equivalent signatures}. 

Consider a hexagonal tiling of the plane, made up of regular hexagons arranged in vertical strips such that two sides of each hexagon are horizontal. Superimpose a grid of parallelograms on the hexagonal tiling, such that each vertex of each parallelogram lies in the center of a hexagon, and each parallelogram has two vertical sides. See Figure~\ref{fig:fundamentalDomain}.

\begin{figure}
    \centering
    \includegraphics[height=8cm]{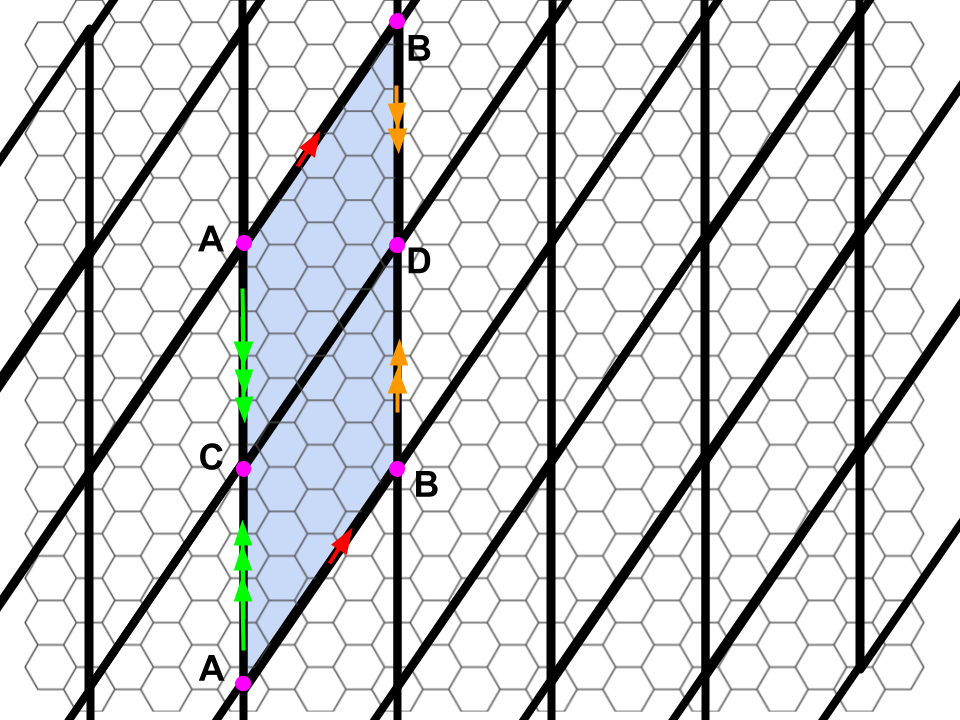}
    \caption{Hexagonal grid with fundamental domain.}
    \label{fig:fundamentalDomain}
\end{figure}

Consider the group of transformations of the plane generated by $180^\circ$ rotations around each vertex of the parallelogram grid. Form a quotient space (an orbifold) by identifying all points in the same orbit of this transformation group. A fundamental domain for this transformation group can be given by a pair of adjacent parallelograms, like the two parallelograms shaded blue in Figure~\ref{fig:fundamentalDomain}, as explained below.

The orbits of points in this pair of parallelograms cover the entire plane, since a rotation around point C and then around the upper point marked A translates the pair of parallelograms up, and a rotation around C and then around the lower point marked A translates the parallelograms down. Repeating these pairs of rotations translates the parallelograms over an entire vertical strip. A rotation around C moves and inverts this vertical strip to cover the strip to the left, while a rotation around D covers the strip to the right. Repeated alternating rotations around points C and D covers all additional vertical strips to the left and the right. 

No smaller subset of this double parallelogram region has orbits that cover the entire plane, by the following reasoning. A product of two $180^\circ$ rotations is a translation through a vector twice the length of the vector connecting the rotocenters. A product of translations through two vectors is a translation through the sum of the vectors. Therefore, a product of an even number of $180^\circ$ rotations around parallelogram grid vertices is a translation through a vector that is some sum $2m\vec{AB} + 2n\vec{AC}$ for some integers $m$ and $n$. A product of an $180^\circ$ rotation and a translation is a $180^\circ$ rotation whose rotocenter is the original rotocenter shifted by half the translation vector. Therefore, a product of an odd number of $180^\circ$ rotations around parallelogram vertices is a $180^\circ$ rotation around a rotocenter that is a parallelogram grid vertex shifted by $\dfrac{1}{2} (2m\vec{AB} + 2n\vec{AC})$ for some integers $m$ and $n$, which is just another parallelogram grid vertex. The points interior to the double parallelogram region cannot be transformed onto each other by $180^\circ$ rotations around parallelogram grid vertices or translations by linear combinations of $2\vec{AB}$ and $2\vec{AC}$. Therefore, the double parallelogram region is a minimal size region whose orbits cover the plane, i.e. a fundamental domain. 

Although no points in the interior of the fundamental domain are identified under the transformation group, many pairs of points on the edges of the fundamental domain are identified with each other. This is indicated by the arrows in Figure~\ref{fig:fundamentalDomain}: a rotation around point $D$ identifies the edges above and below $D$, a rotation around $C$ identifies the edges above and below $C$, and rotation around $A$ followed by rotation around $C$ identifies the top and bottom edges between the points marked $A$ and $B$. 
After identifying edges, the resulting quotient is a topological sphere. See Figure~\ref{fig:orbifoldSphere}.

\begin{figure}
    \centering
    \includegraphics[height=8cm]{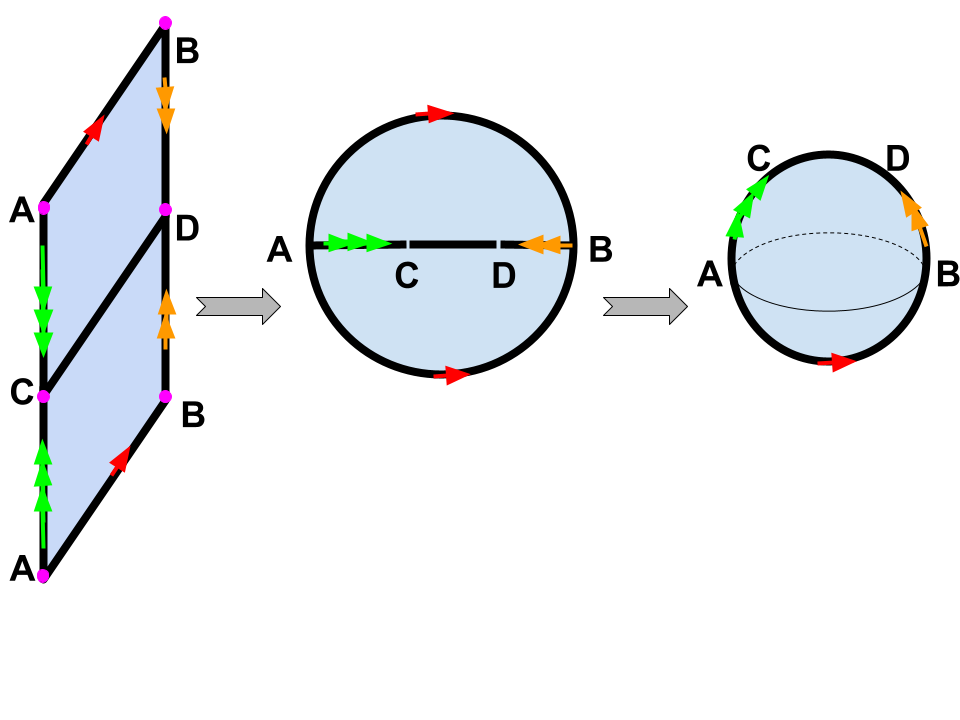}
    \caption{Identifying edges creates a topological sphere.}
    \label{fig:orbifoldSphere}
\end{figure}

Note that the hexagonal tiling is preserved by all of the $180^\circ$ rotations. Therefore, it can be projected via the quotient map down to the quotient sphere. Each hexagon that lies entirely inside the fundamental domain will project onto a hexagon on the sphere. Also, the partial hexagons that are cut off by edges of the fundamental domain, but do not contain the vertices marked $A$, $B$, $C$, and $D$, attach up in pairs and therefore also project onto hexagons in the quotient sphere. The partial hexagons that contain the vertices marked $A$, $B$,  $C$, and $D$ get identified in such a way that only half a hexagon appears in the quotient sphere. This half hexagon is has its diameter identified by a $180^\circ$ rotation, so that it forms a triangle in the quotient sphere. See Figure~\ref{fig:hexagonWrapsToTriangle}. Therefore, the hexagonal tiling naturally forms a pattern of hexagons and triangles on the quotient sphere, with three faces around each vertex. So all the requirements for a trihex are satisfied. 

\begin{figure}
    \centering
    \includegraphics[height=4cm]{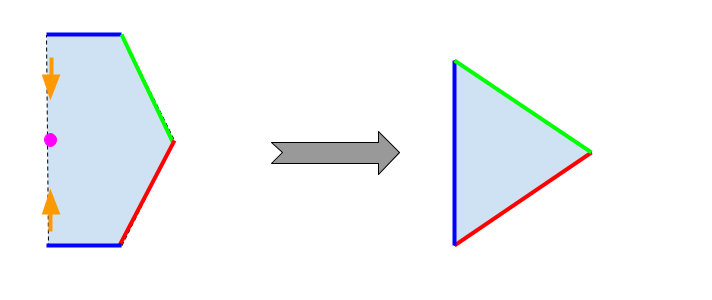}
    \caption{The quotient of a hexagon under a $180^\circ$ rotation around its center point.}
    \label{fig:hexagonWrapsToTriangle}
\end{figure}

A signature for a trihex described as the quotient of a hexagonal tiling can be read off directly from the tiling, as illustrated in  Figures~\ref{fig:hexagonalGridForOffset} and \ref{fig:trihex4-3-3Cover}. Each vertical string of hexagons between vertices labeled $A$ and $C$, together with the hexagons centered at $A$ and $C$, projects to one spine, and each vertical string of hexagons between vertices labeled $B$ and $D$, together with the hexagons centered at $B$ and $D$, projects to a second spine. The  hexagons that lie in vertical strips between the vertical sides of a fundamental domain project to belts between the two spines. Therefore the number $s$ in the signature $(s, b, f)$ is the number of hexagons that lie in a vertical strip strictly between the hexagons at vertex $A$ and vertex $B$.  The number $b$ is the number of vertical columns of hexagons in the tiling that lie entirely between the two vertical edges of the fundamental domain. 

To find the offset for the trihex, choose a hexagon centered at a vertex $A$ and translate it along the diagonal strip of hexagons in the approximately southwest (SW) to northeast (NE) direction, until it coincides with a hexagon in the vertical column of hexagons containing vertices $B$ and $D$.  If we hit a hexagon that is $k$ hexagons below a vertex $B$ or $D$, then our trihex will have offset $k$. This is because a hexagon at vertex $A$ projects to a head triangle in the trihex, and translating this hexagon one column to the right in the SW to NE direction corresponds to deleting one belt and shifting the head triangle clockwise in the trihex. The ultimate position of the hexagon at vertex $A$ after translating all the way to the right edge of the fundamental domain corresponds to the ultimate position that the head vertex of the head triangle is inserted along the second spine in the trihex, which is the offset. For example, the fundamental domain shown in Figure~\ref{fig:trihex4-3-3Cover} covers the trihex with signature $(4, 3, 3)$.

\begin{figure}
    \centering
    \includegraphics[height=8cm]{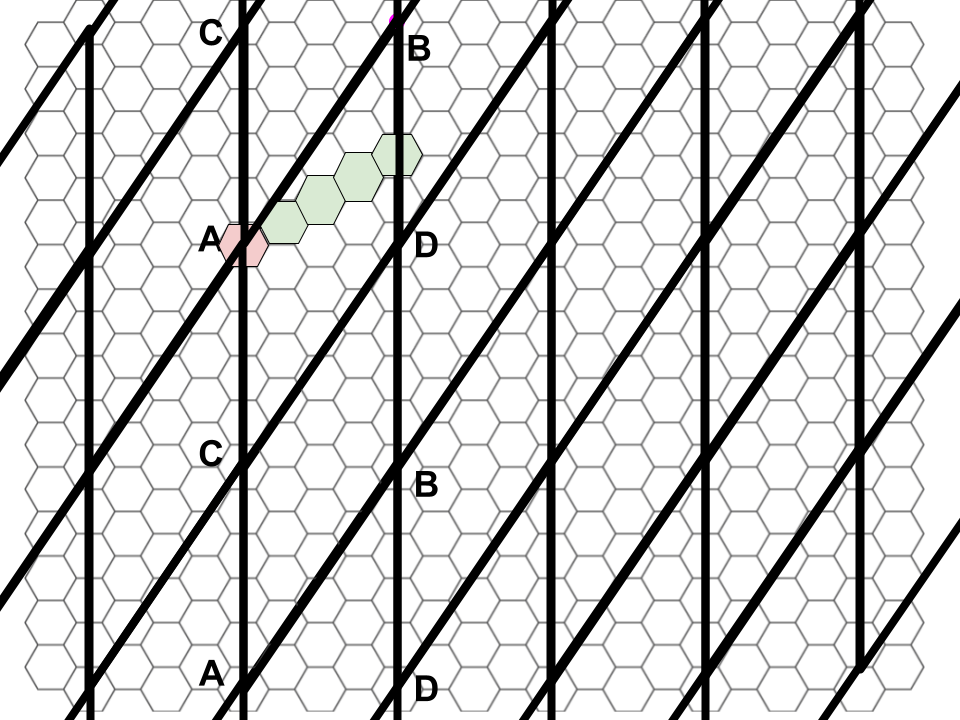}
    \caption{Calculating offset.}
    \label{fig:hexagonalGridForOffset}
\end{figure}

\begin{figure}
           \begin{subfigure}{.4\textwidth}
    \centering
    \includegraphics[height=8cm]{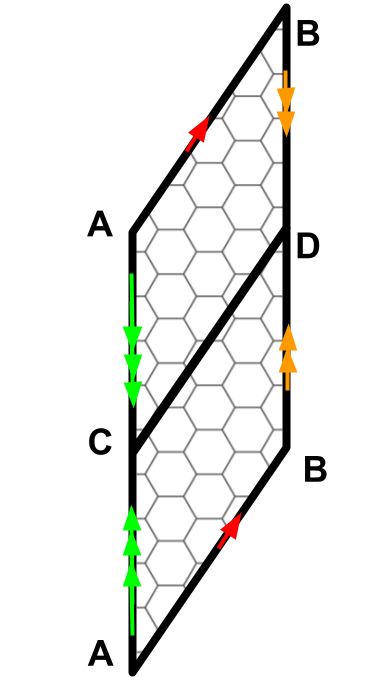}
    \end{subfigure}
           \begin{subfigure}{.4\textwidth}
    \centering
    \includegraphics[height=7cm]{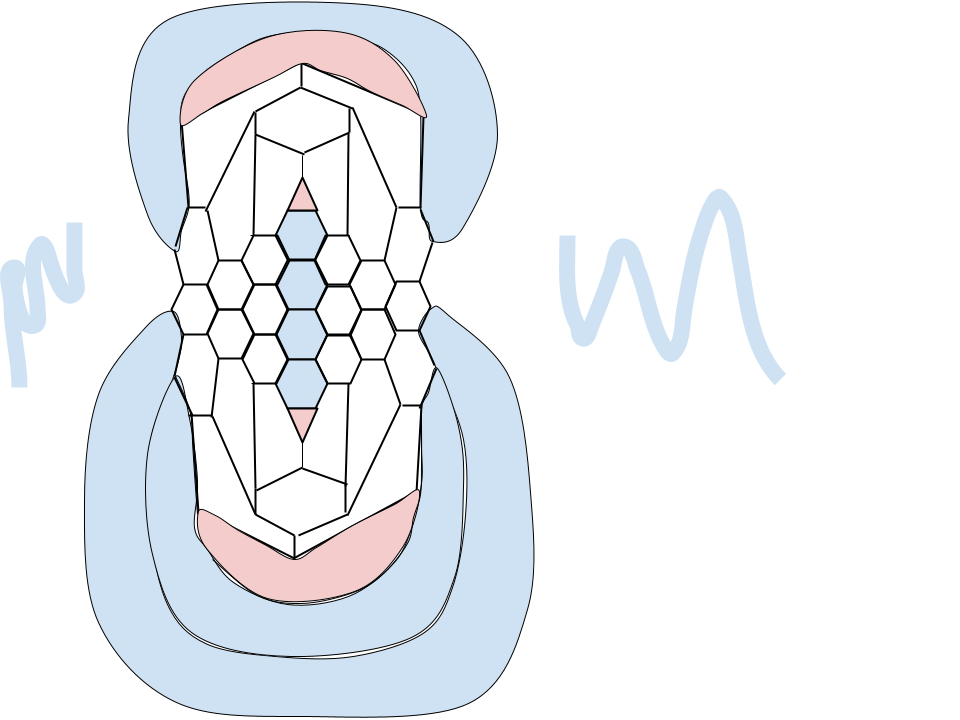}
    \end{subfigure}

    \caption{The fundamental domain that covers the trihex with signature $(4, 3, 3)$.}
    \label{fig:trihex4-3-3Cover}
\end{figure}

It is now possible to conclude the following:

\begin{theorem}
\label{thm:allTrihexesComeFromTilings}
Every trihex can be produced as  the quotient of a hexagonal tiling of the plane under a group of transformations generated by $180^\circ$ rotations around the vertices of a superimposed parallelogram grid.
\end{theorem}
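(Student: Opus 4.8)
The plan is to deduce the theorem by combining the signature classification of Theorem~\ref{thm:achieveAllSigs} with the explicit ``reading off'' correspondence developed in this section. The logical skeleton is short. Given an arbitrary trihex $G$, part (iii) of Theorem~\ref{thm:achieveAllSigs} supplies a signature $(s,b,f)$ with $s\ge 0$, $b\ge 0$, and $0\le f\le s$. It therefore suffices to produce a single hexagonal tiling with a superimposed parallelogram grid whose quotient is a trihex with signature exactly $(s,b,f)$: the discussion preceding the theorem already shows that any such quotient \emph{is} a trihex, and part (ii) of Theorem~\ref{thm:achieveAllSigs} then guarantees that this quotient is equivalent to $G$, since the same signature forces equivalence.

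The substance of the argument is thus the construction, for each admissible triple $(s,b,f)$, of a fundamental domain realizing it. First I would fix a hexagon center as the grid vertex $A$ and take the vertical generating vector $\vec{AC}$ of the grid to connect $A$ to the hexagon center lying $s+1$ hexagon-steps directly along the vertical strip, so that exactly $s$ hexagons sit strictly between the two endpoints; by the reading-off rule this makes the spine length equal to $s$. I would then take the second generating vector $\vec{AB}$ to move $b+1$ hexagon columns to the right, so that precisely $b$ full columns lie strictly between the two vertical edges of the domain, and to have the diagonal displacement along the approximately SW-to-NE direction that places the translate of the hexagon at $A$ exactly $f$ hexagons below a vertex $B$ or $D$. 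Because $\vec{AC}$ and $\vec{AB}$ are both chosen to have endpoints at hexagon centers, every vertex of the resulting parallelogram grid is a hexagon center, so the construction of this section applies and the quotient is a genuine trihex whose signature, read off by the same rules, is $(s,b,f)$.

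The step I expect to require the most care is verifying that this assignment is well defined and attains \emph{every} admissible triple, rather than the bookkeeping of any single case. Concretely, I would check that $\vec{AC}$ and $\vec{AB}$ can be made to land on hexagon centers simultaneously for all $s\ge 0$ and $b\ge 0$ (the lattice of hexagon centers is generated by one vertical step and one diagonal step, so the required endpoints exist), and that the vertical part of the diagonal displacement can be tuned to realize each offset, giving exactly the range $0\le f\le s$ guaranteed by the $(s+1)$-periodicity in the definition of offset. I would also treat the degenerate cases explicitly: $s=0$, where $\vec{AC}$ joins adjacent hexagon centers and each spine degenerates to a pair of triangles sharing an edge, and $b=0$, where the two vertical edges of the domain are adjacent with no intervening column. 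Finally, I would note that the quotient map is orientation preserving, so the equivalence invoked from Theorem~\ref{thm:achieveAllSigs}(ii) is of the orientation-preserving kind required by the definition of trihex equivalence, completing the proof.
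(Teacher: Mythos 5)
Your proposal is correct and follows essentially the same route as the paper's proof: invoke Theorem~\ref{thm:achieveAllSigs} to obtain a signature $(s,b,f)$ for the given trihex, construct a parallelogram grid with vertices at hexagon centers separated by $s$ hexagons vertically and $b+1$ columns horizontally, shifted to realize offset $f$, and conclude via uniqueness of trihexes with a given signature. Your version is somewhat more explicit about the well-definedness checks and the appeal to part (ii) of Theorem~\ref{thm:achieveAllSigs}, which the paper leaves implicit.
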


\begin{proof} Recall from Theorem~\ref{thm:achieveAllSigs} that any trihex can be described with a signature $(s, b, f)$ with $s \geq 0$, $b \geq 0$, and $0 \leq f \leq s$. For any such triple of integers $(s, b, f)$, build a hexagonal tiling with a superimposed parallelogram grid  as follows. Start with a tiling  of the plane by regular hexagons in which two sides of each hexagon are horizontal.  Put two vertices of a parallelogram on the centers of two hexagons in the same vertical column that are  separated by $s$ hexagons strictly between them. Put the other two vertices of the parallologram on the centers of hexagons in another vertical column, $b+1$ columns to the right of the first column. This second pair of vertices should also be separated by $s$ hexagons strictly between them. Shift the second pair of vertices up or down as needed, so that when the hexagons containing the first pair of vertices are translated along a SW to NE diagonal, through $b+1$ columns of hexagons, they end up $f$ hexagons below the hexagons occupied by the second pair of vertices. We now have one parallelogram whose vertices lie on the centers of hexagons. Tile the plane with translated copies of this parallelogram to create a  parallelogram grid. The quotient of the hexagonal tiling by the group generated by $180^\circ$ rotations around parallelogram vertices  is a trihex with signature $(s, b, f)$.

\end{proof}

The process of creating a hexagonal tiling that covers a given trihex can be thought of as ``unwrapping'' the trihex around each triangle. Figure~\ref{fig:4-1-2} shows the unwrapping of the  trihex $(4, 1, 2)$. Numbered hexagons in the trihex correspond to numbered hexagons in the hexagonal tiling.

\begin{figure}
              \begin{subfigure}{.4\textwidth}
    \centering
    \includegraphics[height=8cm]{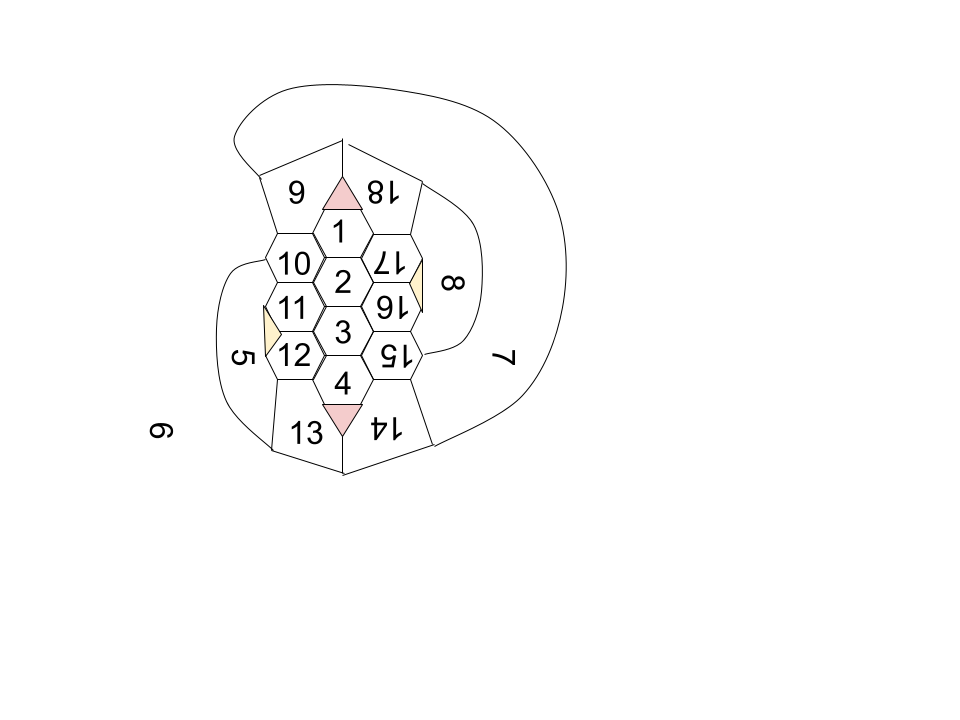}
    \label{fig:4-1-2Trihex}
    \end{subfigure}
            \begin{subfigure}{.4\textwidth}
    \centering
    \includegraphics[height=8cm]{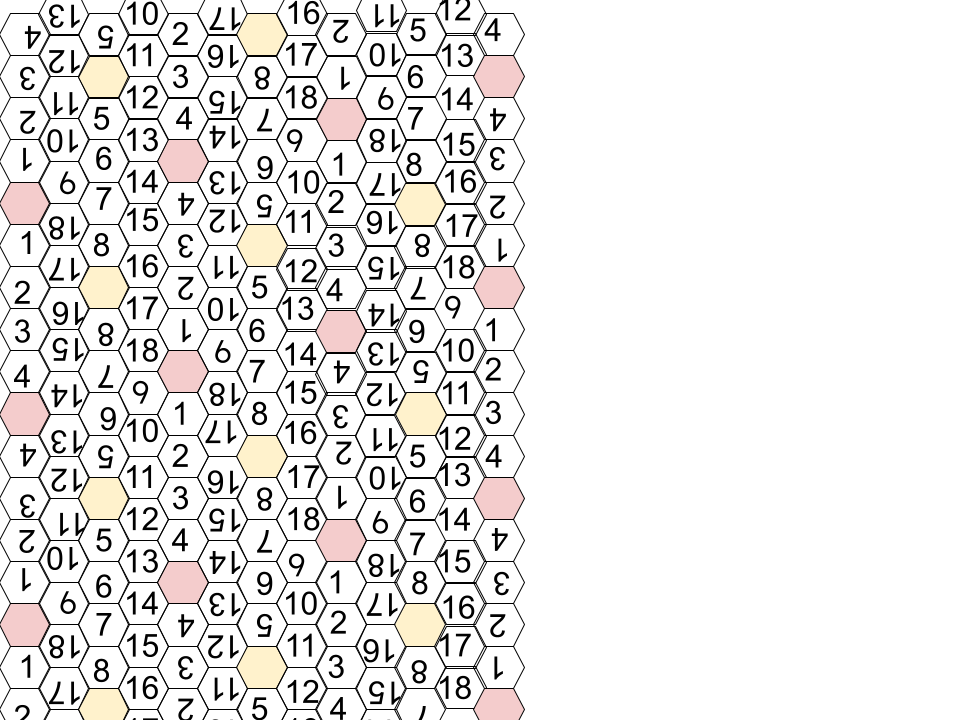}
    \label{fig:4-1-2TrihexTiling}
    \end{subfigure}

    \caption{The trihex with signature $(4, 1, 2)$ and the hexagonal tililng that covers it.}
    \label{fig:4-1-2}
\end{figure}

\section{Equivalent signatures}
\label{sec:equivalent signatures}

The signature $(s, b, f)$ for a trihex is not necessarily unique. This section develops rules for finding alternative signatures for a trihex based on a given signature and shows that there is a one to one correspondence between equivalence classes of signatures, as defined in this section, and equivalence classes of trihexes, as defined in Section~\ref{sec:defs}.

Start with a trihex with signature $(s_1, b_1, f_1)$. 
As described in Theorem~\ref{thm:allTrihexesComeFromTilings}, this trihex is the quotient of a hexagonal tiling of the plane under a group of transformations generated by $180^\circ$ rotations around vertices of a superimposed parallelogram grid.  We will refer to the hexagons centered at vertices of the parallelogram grid as  ``special hexagons''. These hexagons cover triangles in the trihex. The quotients of vertical columns of hexagons that contains special hexagons are spines of the trihex. We also use the phrase ``vertical spine'' to refer to a strips of vertical hexagons between pairs of special hexagons.
For example, a hexagonal tiling for trihex $(5, 2, 2)$ is shown in Figure~\ref{fig:5-2-2Tiling}. Vertical spines are shaded blue. Special hexagons are shaded pink and yellow. The superimposed parallelogram grid is not drawn.

\begin{figure}
    \centering
    \includegraphics[height=8cm]{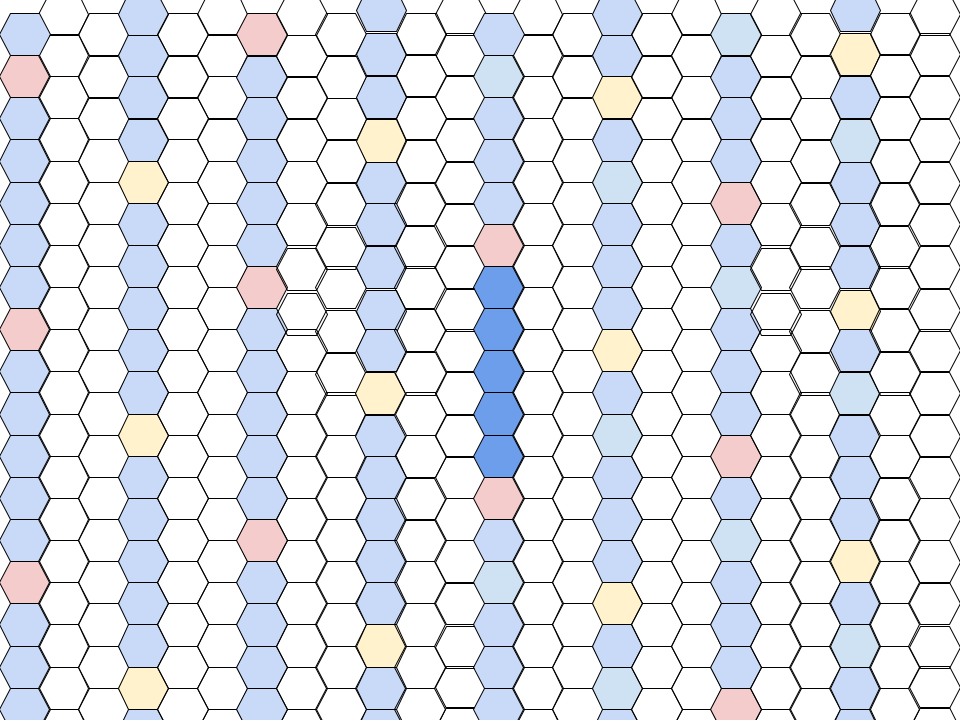}
    \caption{A hexagonal tiling for the trihex $(5, 2, 2)$.}
    \label{fig:5-2-2Tiling}
\end{figure}

There are two additional ways to describe the trihex. Instead of using vertical strings of hexagons to make spines, we could build spines by setting off from a special hexagon at an angle  $60^\circ$ clockwise from due north or at an angle $120^\circ$ clockwise from due north. We will refer to these directions as the southwest (SW) to northeast (NE) direction and the northwest (NW) to southeast (SE) direction.  See Figures~\ref{fig:SWtoNESpine} and~\ref{fig:NWtoSESpine}. We will use the notation $(s_2, b_2, f_2)$ to refer to the signature when we build spines in the SW to NE direction and $(s_3, b_3, f_3)$ to refer to the signature when we go in the NW to SE direction.

Suppose we go from SW to NE. See Figure~\ref{fig:SWtoNESpine}, where four special hexagons are labelled $T, B, L, R$ (top, bottom, left, and right). Since the original offset was $f_1$, this means that if we start at a special hexagon, say $L$, and translate it through $b_1+1$ vertical columns of hexagons, always along a SW to NE diagonal of hexagons, thereby arriving in another vertical column with special hexagons, we land $f_1$ hexagons below a special hexagon. For each additional $b_1 + 1$ vertical columns of hexagons we go through in the SW to NE direction, we land an additional $f_1$ hexagons below a special hexagon. If at any moment we land a multiple of $s_1 + 1$ hexagons below a special hexagon, then we are directly on a special hexagon, since special hexagons appear every $s_1 + 1$ hexagons in the vertical column. Let $j_2$ be the smallest integer $\geq 1$ such that $j_2 \cdot f_1$ is a multiple of $s_1 + 1$ (i.e. $j_2$ is the order of $f_1$ in $Z_{s_1+1}$). Then the first time that we land directly on a special hexagon is when we have traveled through $j_2 \cdot (b_1 + 1)$ vertical columns. The string of hexagons in the SW to NE diagonal that connects the original special hexagon to this final special hexagon projects to  a spine in the quotient trihex. This spine will contain $j_2(b_1 + 1) - 1$ hexagons, since the final hexagon projects to a triangle. So this spine has  length $s_2 = j_2(b_1 + 1) - 1$. 
For example, if we start at a special hexagon of a $(5, 2, 2)$ hexagonal grid and head northeast, we will create spines of length 8, because $b_1 + 1 = 3$, $s_1 + 1 = 6$, $f_1 = 2$,  the order of $2$ in $Z_{6}$ is $j_2 = 3$, and $j_2 \cdot (b_1 + 1) - 1 = 3 \cdot 3 - 1 = 8$. See Figure~\ref{fig:SWtoNESpine}.

\begin{figure}
    \centering
    \includegraphics[height = 8 cm]{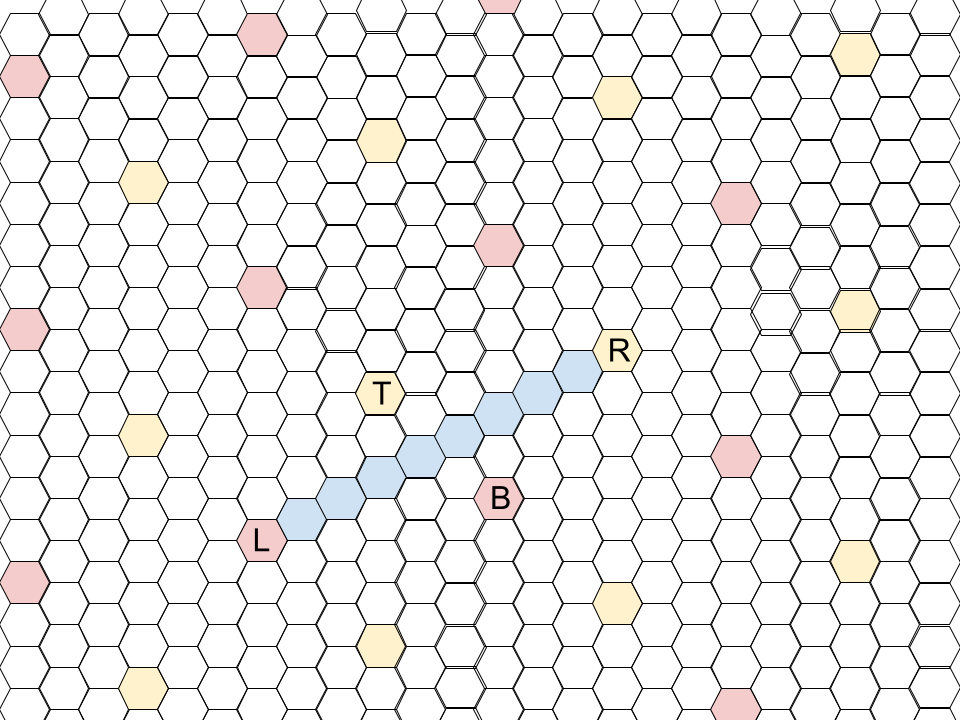}
    \caption{An alternative spine of (5,2,2) with length 8.}
    \label{fig:SWtoNESpine}
\end{figure}

Suppose instead that we translate a special hexagon in the NW to SE direction. See Figure~\ref{fig:NWtoSESpine}. If we travel through $b_1+1$ vertical columns of hexagons, thereby arriving in another vertical column with special hexagons, we land $f_1 + b_1 + 1$ hexagons below a special hexagon, since each translation through a single column in the NW to SE direction puts us one hexagon below where we would move to when translating in the SW to NE direction.  For each additional $b_1 + 1$ vertical columns of hexagons we go through in the NW to SE direction, we land an additional $f_1 + b_1 + 1$ hexagons below a special hexagon. So the first time we hit a special hexagon is when we have traveled through $j_3(b_1 + 1)$ hexagons, where $j_3$ is the smallest integer $\geq 1$ such that $j_3(f_1 + b_1 + 1)$ is a multiple of $s_1 + 1$, i.e. $j_3$ is the order of $f_1 + b_1 + 1$ in $Z_{s_1+1}$. At this point we will have created a spine of length $j_3 (b_1 + 1) - 1$. So $s_3 = j_3 (b_1 + 1) - 1$. 
For example, if we start with $(s_1, b_1, f_1) = (5, 2, 2)$, then $f_1 + b_1 + 1 = 5$ and $s_1 + 1 = 6$, and $5$ has order $j_3 = 6$ in $Z_6$. Since $j_3 \dot (b_1 + 1) - 1 = 6 \cdot 3 - 1 = 17$, we will have a spine of length 17. See Figure~\ref{fig:NWtoSESpine}.

\begin{figure}
    \centering
    \includegraphics[height = 8 cm]{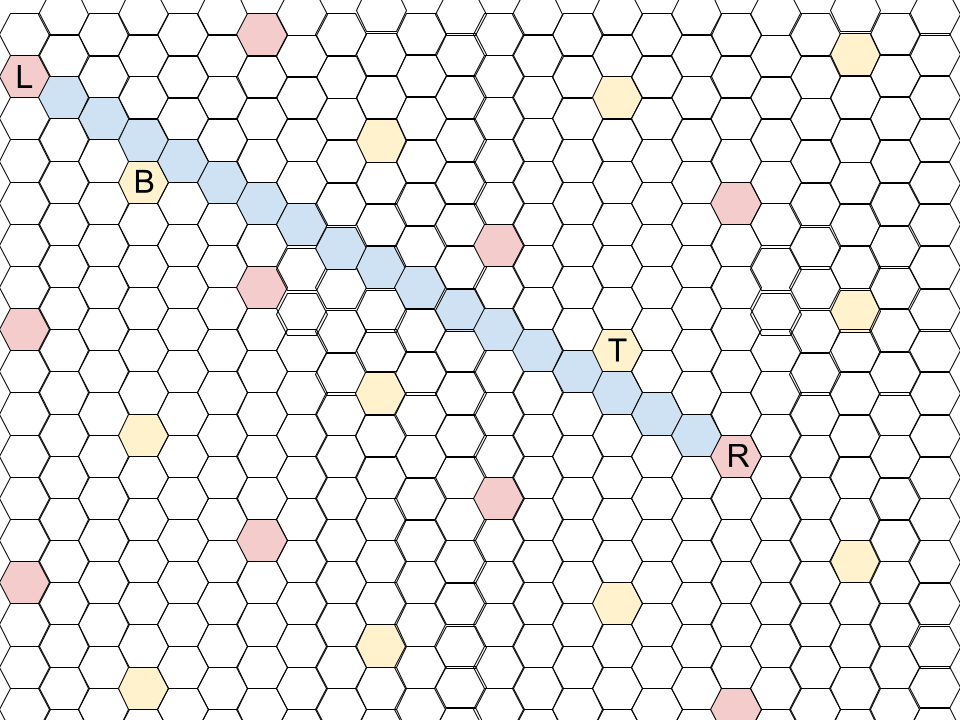}
    \caption{An alternative spine for (5,2,2) with length 17.}
    \label{fig:NWtoSESpine}
\end{figure}

To find the number of belts in the SW to NE decomposition, note that the total number $h$ of hexagons, based on the original signature of $(s_1, b_1, f_1)$, is given by $h = 2s_1 + b_1(2s_1 + 2) =2s_1b_1 + 2s_1 + 2b_1$, since each of the two spines contains $s_1$ hexagons and each of the $b_1$ surrounding belts contains $2s_1 + 2$ hexagons. If $(s_2, b_2, f_2)$ is the new signature based on spines in the SW to NE direction, then $h$ must also equal $2s_2 b_2 + 2s_2 + 2b_2$. So $b_2 = \dfrac{h - 2s_2}{2s_2 + 2}$. Similarly, the number of belts for the NW to SE decomposition with signature $(s_3, b_3, f_3)$ is given by $ b_3 = \dfrac{h - 2s_3}{2s_3 + 2}$.
For example, for the trihex $(5, 2, 2)$, we have $h = 2 \cdot 5 \cdot 2 + 2 \cdot 5 + 2 \cdot 2 = 34$. We saw that $s_2 = 8$ and $s_3 = 17$. So the number of belts using the SW to NE spines is $b_2 = \dfrac{34 - 2\cdot 8}{2 \cdot 8 + 2} = \dfrac{18}{18} = 1$. The number of belts using the NW to SE spines is $b_3 = \dfrac{34 - 2\cdot 17}{2 \cdot 17 + 2} = \dfrac{0}{36} = 0$. Note that for the SW to NE decomposition, the belts in the trihex are covered by diagonal strips of hexagons in the SW to NE direction that lie between the diagonal strips containing special hexagons. Similarly, for the NW to SE decomposition, the belts in the trihex are covered by diagonal strips of hexagons in the NW to SE direction. We will call these diagonal strips of hexagons ``belt strips''.

To find the offset for the SW to NE signature, we first need to find a special hexagon that is adjacent to the belt strips around a SW to NE spine. Label the special hexagons on the left and right ends of a fixed diagonal spine $L$ and $R$, respectively. Label the special hexagons that are adjacent to the belt strips $T$ and $B$, where $T$ is adjacent on top and $B$ is adjacent on bottom. See Figure~\ref{fig:SWtoNESpine}. Hexagons $T$ and $B$ project to the head and tail triangles of a second SW to NE spine whose position relative to the first SW to NE spine will give the offset. Hexagon $T$ will be a special hexagon that is $b_2 + 1$ hexagons above the original  diagonal spine, or equivalently, the diagonal spine is $b_2 + 1$ hexagons below $T$. Recall that each time we travel $b_1 +1$ columns along the diagonal spine in the SW to NE direction, we land an additional $f_1$ hexagons below a special hexagon. Therefore, we need to find a number $p_2$ such that $p_2\cdot f_1 \equiv (b_2 + 1) \mod (s_1 + 1)$, and travel $p_2 (b_1 + 1)$ columns to the right, in order to land in the same column as $T$, but $b_2 + 1$ hexagons below it. For simplicity, pick $p_2$ to the smallest number $\geq 1$ such that $p_2\cdot f_1 \equiv (b_2 + 1) \mod (s_1 + 1)$.

To find the corresponding offset, notice that since $T$ is $p_2 (b_1 + 1)$ columns to the right of $L$'s column, it will be $(s_2 + 1) - p_2 (b_1 + 1)$ columns to the left of $R$'s column.  If $b_2 = 0$, the number of columns to the left of $R$ is one more than the offset, so $f_2 = (s_2 + 1) - p_2 (b_1 + 1) - 1 = s_2 - p_2(b_1 +1)$. If $b_2 > 0$, then calculating offset involves deleting $b_2$ diagonal belts in the quotient trihex and moving clockwise, which is equivalent to moving $b_2$ columns to the right in the {\bf NW to SE} direction in the hexagonal tiling of the plane. Therefore, the offset will be $b_2$ smaller, that is, $f_2 = s_2 - p_2(b_1 +1) - b_2 \mod (s_2 + 1)$. Note that offset is defined mod $(s_2 + 1)$ since $s_2$ is the length of the diagonal spine. See Figure~\ref{fig:SWtoNESpine}.

For the $(5, 2, 2)$ trihex, $s_1 = 5$, $b_1 = 2$, $f_1 = 2$, $s_2 = 8$, and $b_2 = 1$. The number $p_2$ is defined as the smallest number $\geq 1$ such that $p_2 \cdot f_1 \equiv (b_2 + 1) \mod (s_1 + 1)$, i.e. such that $p_2 \cdot 2 \equiv 2 \mod 6$. Therefore, $p_2 = 1$, and $f_2 = 8 - 1\cdot 3 - 1 \mod 9 = 4$. The $(5, 2, 2)$ trihex has an alternative signature of $(8,1,4)$.

To find the offset for the NW to SE signature, label the special hexagons on the left and right ends of the diagonal spine in the NW to SE direction with $L$ and $R$, respectively, and the special hexagons that are adjacent to the surrounding belt strips $T$ and $B$, where $T$ is adjacent on top and $B$ is adjacent on bottom. See Figure~\ref{fig:NWtoSESpine}. Hexagon $T$ will be $b_3 + 1$ hexagons above the diagonal spine, or equivalently, the diagonal spine is $b_3 + 1$ hexagons below $T$. Recall that each time we travel $b_1 +1$ columns along the diagonal spine in the NW to SE direction, we land an additional $f_1 + b_1 + 1$ hexagons below a special hexagon. Therefore, we need to find a number $p_3$ such that $p_3\cdot (f_1 + b_1 + 1) \equiv (b_3 + 1) \mod (s_1 + 1)$, and travel $p_3 (b_1 + 1)$ columns to the right, in order to land in the same column as $T$, but $b_3 + 1$ hexagons below it. For simplicity, pick $p_3$ to the smallest number $\geq 1$ such that $p_3\cdot (f_1 + b_1 + 1) \equiv (b_3 + 1) \mod (s_1 + 1)$. 

To find the corresponding offset, notice that since $T$ is $p_3 (b_1 + 1)$ columns to the right of $L$'s column, it will be $(s_3 + 1) - p_3 (b_1 + 1)$ columns to the left of $R$'s column. If $b_3 = 0$, the number of columns to the left of $R$ is equal to the offset, instead of one more than the offset, like it was for the SW to NE spine. So $f_3 = (s_3 + 1) - p_3 (b_1 + 1)$. If $b_3 > 0$, then calculating offset in the quotient trihex involves deleting $b_3$ diagonal belts and moving  clockwise, which is equivalent to simply moving the hexagon $T$ straight down in its column in the hexagonal grid covering. Therefore, the offset will be $f_3 = (s_3 + 1) - p_3(b_1 +1) \mod (s_3 + 1)$. Again, the offset is defined mod $s_3 + 1$ since $s_3$ is the length of the diagonal spine. See Figure~\ref{fig:NWtoSESpine}.

For the $(5, 2, 2)$ trihex, $s_1 = 5$, $b_1 = 2$, $f_1 = 2$, $s_3 = 17$, and $b_3 = 0$, so $f_1 + b_1 + 1 = 5$. The number $p_3$ is defined as the smallest number $\geq 1$ such that $p_3 \cdot (f_1 + b_1 + 1) \equiv b_3 + 1 \mod (s_1 + 1)$, i.e. such that $p_3 \cdot 5 \equiv 1 \mod 6$. Therefore, $p_3 = 5$, and $f_3 = 18 - 5\cdot 3 \mod 18 = 3$. The $(5, 2, 2)$ trihex has an alternative signature of $(17,0,3)$.

\begin{definition}
    Given a trihex with signature $(s_1, b_1, f_1)$, the \emph{equivalent signatures} for this trihex are the original signature $(s_1, b_1, f_1)$ along with signatures $(s_2, b_2, f_2)$ and $(s_3, b_3, f_3)$ found using the following algorithms.
\label{def:altSigs}
\end{definition}

Using the SW to NE spine :

\begin{enumerate}
    \item Find the smallest number $j_2 \geq 1$ such that $j_2 \cdot f_1 \equiv 0 \mod (s_1 + 1)$.
    \item $s_2 = j_2(b_1 + 1) - 1$.
    \item Compute the total number of hexagons in the original trihex: $h = 2s_1\cdot b_1+2s_1+2b_1$. 
    \item $b_2 = \dfrac{h-2s_2}{2s_2+2}$.
    \item Find the smallest number $p_2 \geq 1$ such that $p_2\cdot f_1 \equiv (b_2 + 1) \mod (s_1 + 1)$.
    \item $f_2 = s_2 - p_2(b_1+1)-b_2 \mod (s_2+1)$.
\end{enumerate}

Using the NW to SE spine (only steps 1, 5, and 6 are different):

\begin{enumerate}
    \item Find the smallest number $j_3 \geq 1$ such that $j_3(f_1 + b_1 + 1) \equiv 0 \mod (s_1 + 1)$.
    \item $s_3 = j_3(b_1 + 1) - 1$.
    \item Compute the total number of hexagons in the original trihex: $h = 2s_1\cdot b_1+2s_1+2b_1$. 
    \item $b_3 = \dfrac{h-2s_3}{2s_3+2}$.
    \item Find the smallest number $p_3 \geq 1$ such that $p_3\cdot (f_1 + b_1 + 1) \equiv (b_3 + 1) \mod (s_1 + 1)$.
    \item $f_3 = s_3 + 1 - p_3(b_1+1) \mod (s_3+1)$.
\end{enumerate}

The signatures $(s_1, b_1, f_1)$, $(s_2, b_2, f_2)$, and $(s_3, b_3, f_3)$ give the three alternative descriptions of the same arrangement of special hexagons on a hexagonal tiling of the plane, found by rotating the ``vertical'' direction by 0 or 180 degree, 60 or 240 degrees, and 120 or 300 degrees clockwise, respectively. Therefore, this definition of equivalent signatures does in fact describe an equivalence relationship.
Although the three signatures are usually distinct, it is possible for all three to be the same. See Table~\ref{tbl:alternativeSignatures}. It is not possible for two of the three signatures to be the same and the third signature different: if two signatures are the same, say $(s_1, b_1, f_1)$ = $(s_2, b_2, f_2)$, then rotating the hexagonally tiled plane by 60 degrees will  produce the same configuration of special hexagons. Therefore, rotating a second time by the 60 degrees will again produce the same configuration of special hexagons, so $(s_3, b_3, f_3)$ will also be the same. 

Recall that two trihexes considered equivalent if they are not only  isomorphic as graphs, but if there is also an orientation-preserving homeomorphism of the plane that takes one graph to the other. Chiral trihexes that are mirror images of each other are not considered equivalent. 
Figure~\ref{fig:mirrorImage} illustrates the following relationship:

\begin{proposition}
\label{lemma:mirrorSig}
A trihex with signature $(s, b, f)$ has a mirror image trihex with signature $(s, b, s-f-b \mod (s+1))$. 
\end{proposition}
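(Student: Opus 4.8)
The plan is to work entirely in the hexagonal tiling picture of Section~\ref{sec:grid}, where passing to a mirror image becomes a single reflection of the plane, and then to read the new signature off the reflected configuration of special hexagons.

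First I would fix coordinates on the lattice of hexagon centers. Writing $\vec N$ for the vector from a hexagon center to the center directly above it, and $\vec{NE}$, $\vec{NW}$ for the vectors to its upper-right and upper-left neighbors, the three satisfy $\vec N = \vec{NE} + \vec{NW}$. Using the construction in the proof of Theorem~\ref{thm:allTrihexesComeFromTilings}, the special hexagons of a trihex with signature $(s,b,f)$ form the lattice $\Lambda$ generated by $\vec P = (s+1)\vec N$ (two special hexagons in one column with $s$ hexagons strictly between them) and $\vec Q = (b+1)\vec{NE} + f\vec N$ (the pair $b+1$ columns to the right, landing $f$ hexagons below a special hexagon after a SW-to-NE translation). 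I would record here how $s$, $b$, $f$ are recovered from $\Lambda$: $s+1$ is the vertical spacing of special hexagons within a column, $b+1$ is the column spacing, and $f$ is how many hexagons below a special hexagon one lands after translating a special hexagon through $b+1$ columns in the SW-to-NE direction.

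Next I would apply the reflection $\sigma$ across a vertical axis. This is an orientation-reversing isometry preserving the hexagonal tiling, so the quotient of the reflected tiling is precisely the mirror-image trihex; hence it suffices to compute the signature of $\sigma(\Lambda)$. Since $\sigma$ fixes $\vec N$ and swaps $\vec{NE}\leftrightarrow\vec{NW}$, it fixes $\vec P$ and sends $\vec Q$ to $(b+1)\vec{NW} + f\vec N$. Substituting $\vec{NW} = \vec N - \vec{NE}$, then replacing this generator by its negative and reducing its $\vec N$-coefficient modulo $\vec P$ (that is, modulo $s+1$), I would bring the generators of $\sigma(\Lambda)$ into the standard form $\vec P = (s+1)\vec N$ and $(b+1)\vec{NE} + (s-f-b \bmod (s+1))\,\vec N$. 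Reading off the signature with the conventions recorded above yields $(s,\,b,\,s-f-b \bmod (s+1))$, as claimed.

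The vector algebra is routine; the step needing care is the bookkeeping that turns $\sigma(\Lambda)$ back into a \emph{standard} signature. One must check that $s$ and $b$ are genuinely unchanged — that after reflection there are still no special hexagons in intermediate columns or at intermediate heights within a column, so that $\vec P$ and the column spacing $b+1$ remain the correct primitive data — and, most importantly, that negating the second generator and reducing modulo $s+1$ is exactly what is needed to place the offset into the range $0 \le f \le s$ under the SW-to-NE convention. A sign slip in the swap $\vec{NE}\leftrightarrow\vec{NW}$ or in the direction of the reduction would turn $s-f-b$ into $f+b-s$ or shift it by a unit, so these are the crux of the argument. As a final consistency check I would note that applying the formula twice returns the original offset, $s-(s-f-b)-b = f$, reflecting the fact that the mirror image of a mirror image is the original trihex.
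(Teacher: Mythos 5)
Your argument is correct, but it takes a genuinely different route from the paper's. The paper proves this proposition entirely at the level of the trihex itself: the mirror image visibly preserves the spine length $s$ and the belt count $b$; when $b=0$ the offset reverses to $s-f$; and when $b>0$ one observes that deleting belts while shifting \emph{counterclockwise} (the mirror of the clockwise convention) lands the head vertex at $s-f$, and that converting back to the clockwise convention costs one unit of offset per deleted belt, yielding $s-f-b \bmod (s+1)$. You instead lift to the hexagonal tiling of Section~\ref{sec:grid}, encode the signature as the lattice of special hexagons generated by $(s+1)\vec N$ and $(b+1)\vec{NE}+f\vec N$, and compute the effect of a vertical reflection, which sends the second generator to $-(b+1)\vec{NE}+(f+b+1)\vec N$; negating and reducing mod $s+1$ gives exactly $(b+1)\vec{NE}+\bigl((s-f-b)\bmod(s+1)\bigr)\vec N$. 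Your computation checks out, and the points you flag as the crux (that $s$ and $b$ are unchanged, and that the reduction lands the offset in the right range under the SW-to-NE reading convention) are exactly the right ones and are easy to verify from the lattice description. The trade-off: your version is a mechanical, fully checkable calculation that meshes with the reflection-of-tilings picture the paper needs anyway in the proof of Theorem~\ref{lemma:mirrorImages}, but it leans on Section~\ref{sec:grid}'s identification of the offset with the SW-to-NE translation data and should state explicitly that the vertical reflection conjugates one rotation group to the other and hence descends to an orientation-reversing homeomorphism of the quotient spheres; the paper's version is more elementary and self-contained at the graph level, at the price of resting on the somewhat informal observation that counterclockwise shifting increases the offset by one per belt removed.
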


\begin{proof}
The mirror image of trihex $(s, b, f)$ will still have the same spine lengths as the original ($s$) and the same number of belts in between them ($b$). See Figure~\ref{fig:mirrorImage}

Suppose that $b = 0$. If the original trihex has offset $f$, then its mirror image will have offset $s - f$. 
Suppose $b > 0$. Since the offset of the original trihex is $f$, if we  delete the $b$ belts one at a time  and shift the head vertex clockwise each time, then the head vertex lands at offset $f$. Therefore, in the mirror image trihex, if we delete belts one at a time and shift the head vertex \emph{counterclockwise} each time, then the head vertex will land at the mirror image position offset $s-f$. Since moving counterclockwise instead of clockwise increases offset by 1 for each belt that is removed, the actual offset for the mirror image found by shifting clockwise will be $b$ less than $s-f$, i.e. $s - f - b \mod(s + 1)$.
\end{proof}

\begin{figure}

           \begin{subfigure}{.4\textwidth}
    \centering
    \includegraphics[height=5cm]{trihex3-1-2.png}

    \end{subfigure}
           \begin{subfigure}{.4\textwidth}
    \centering
    \includegraphics[height=5cm]{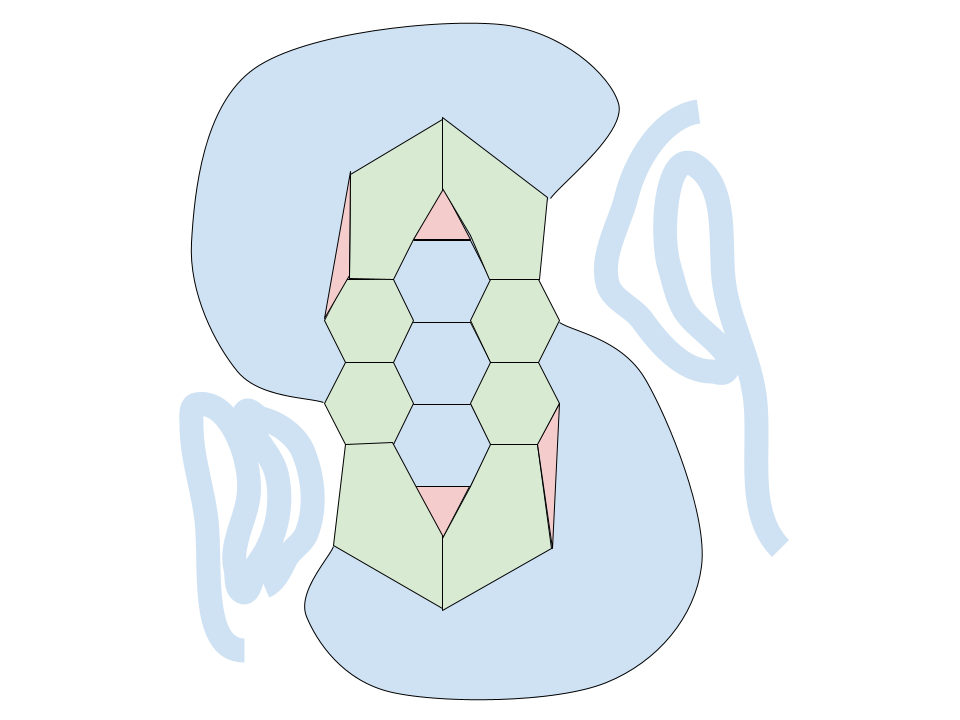}
    \end{subfigure}
    \caption{The trihex $(3, 1, 2)$ and its mirror image $(3, 1, 0)$}
 \label{fig:mirrorImage}
\end{figure}

We can now state our classification of trihexes:  trihexes are precisely indexed by the equivalence classes of triples $(s, b, f)$ under the relations for triples stated in Definition~\ref{def:altSigs}.

\begin{theorem}

\begin{enumerate}[(i)]

\item Suppose $T_0$ and $T_1$ are trihexes with signatures $(s_0, b_0, f_0)$ and $(s_1, b_1, f_1)$. Then $T_0$ and $T_1$ are equivalent trihexes if and only if $(s_0, b_0, f_0)$ and $(s_1, b_1, f_1)$ are equivalent signatures.

Therefore, there is a bijection between equivalence classes of trihexes and equivalence classes of signatures.

\item Suppose $T_0$ and $T_1$ are trihexes with signatures $(s_0, b_0, f_0)$ and $(s_1, b_1, f_1)$. Then $T_0$ and $T_1$ are isomorphic as graphs but not equivalent (i.e. they are mirror images of each other), if and only if  $(s_1, b_1, f_1)$ is  equivalent to $(s_0, b_0, s_0-f_0-b_0 \mod (s_0+1))$.

Therefore there is a bijection between graph isomorphism classes of trihexes and sets of signatures that are either equivalent or mirror equivalent.

\end{enumerate}
\label{lemma:mirrorImages}
\end{theorem}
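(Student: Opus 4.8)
The plan is to reduce both parts to a single structural description. By Theorem~\ref{thm:allTrihexesComeFromTilings}, every trihex is the quotient of the hexagonal tiling of the plane by the group $G$ generated by $180^\circ$ rotations about the vertices of a parallelogram grid; equivalently, the quotient sphere is the Euclidean orbifold with four order-two cone points (the four triangles), and the tiled plane is its universal orbifold cover with deck group $G$. The hexagon centers form a triangular lattice $L$, and the computation in Section~\ref{sec:grid} shows every rotocenter is a parallelogram grid vertex, so the centers of the \emph{special hexagons} form a sublattice $\Lambda \subset L$. The pair $(L, \Lambda)$ is the canonical datum attached to the trihex. My claim is that two trihexes are equivalent by an orientation-preserving homeomorphism if and only if their pairs differ by an orientation-preserving similarity of the plane, and that reading off a signature amounts to choosing one of the three lattice axes of $L$ as the ``vertical'' direction. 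Since the orientation-preserving point group of $L$ is cyclic of order $6$, and the rotations by $0^\circ/180^\circ$, $60^\circ/240^\circ$, $120^\circ/300^\circ$ fix the three axes pairwise, there are exactly three signatures, permuted by $60^\circ$ rotation; these are precisely the three equivalent signatures of Definition~\ref{def:altSigs}.

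For Part (i), the ``if'' direction is essentially the content of Section~\ref{sec:equivalent signatures}: the signatures $(s_2,b_2,f_2)$ and $(s_3,b_3,f_3)$ were read off the same arrangement of special hexagons along the SW--NE and NW--SE diagonals, i.e.\ after rotating the vertical axis by $60^\circ$ and $120^\circ$, so all three describe the same tiling, and Theorem~\ref{thm:achieveAllSigs}(ii) gives that the resulting trihexes are equivalent. For ``only if'' I would take an orientation-preserving homeomorphism $\varphi$ realizing $T_0 \simeq T_1$ and lift it to the universal orbifold covers: $\varphi$ lifts to an orientation-preserving homeomorphism $\tilde\varphi$ of the plane carrying the tiling and rotocenter lattice of $T_0$ to those of $T_1$ and conjugating $G_0$ to $G_1$. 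Because $\tilde\varphi$ preserves the combinatorial hexagonal tiling and is orientation preserving, it is an orientation-preserving symmetry of $L$, hence a translation composed with a rotation by a multiple of $60^\circ$, and it carries $\Lambda_0$ onto $\Lambda_1$. Reading the signature of $T_1$ in its vertical direction therefore equals reading the signature of $T_0$ along the axis obtained by rotating back through a multiple of $60^\circ$, which is one of the three axis choices and so yields one of the three equivalent signatures of $T_0$. This gives the asserted bijection.

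For Part (ii), I would combine Part (i) with Proposition~\ref{lemma:mirrorSig}. By the Whitney-theorem discussion in Section~\ref{sec:defs}, $T_0$ and $T_1$ are isomorphic as graphs but inequivalent exactly when $T_1$ is equivalent to the mirror image of $T_0$. Proposition~\ref{lemma:mirrorSig} identifies the mirror signature as $(s_0, b_0, s_0 - f_0 - b_0 \bmod (s_0+1))$, so by Part (i) this holds if and only if $(s_1,b_1,f_1)$ is equivalent to $(s_0, b_0, s_0 - f_0 - b_0 \bmod (s_0 + 1))$. The final bijection follows because precomposing with a fixed orientation-reversing homeomorphism of the sphere sends each graph isomorphism class to the union of a signature equivalence class and its mirror class; I would flag the degenerate achiral case, where the mirror signature already lies in the original equivalence class so that the two classes coincide and ``isomorphic but inequivalent'' cannot occur.

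The hardest step is the lifting argument in Part (i): I must show that every orientation-preserving equivalence of the quotient trihexes lifts to a genuine symmetry of the lattice $L$ rather than an arbitrary equivariant homeomorphism, and that the only ambiguity in passing from $(L,\Lambda)$ to a signature is the choice among three axes. Concretely this requires verifying that the ``unwrapping'' of Section~\ref{sec:grid} is canonical up to the symmetries of $L$, that the triangular faces correspond bijectively to the rotocenters in $\Lambda$, and that the explicit SW--NE and NW--SE algorithms of Definition~\ref{def:altSigs} reproduce exactly the signatures read along the rotated axes, so that ``three axes'' and ``three equivalent signatures'' genuinely coincide, including the cases where fewer than three are distinct.
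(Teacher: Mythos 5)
Your proposal is correct and follows essentially the same route as the paper: lift the graph isomorphism to the hexagonal tilings via Whitney's theorem, straighten the lift to a symmetry of the hexagon-center lattice, observe that orientation-preserving symmetries are translations composed with rotations by multiples of $60^\circ$ so that the three rotation classes correspond exactly to the three equivalent signatures of Definition~\ref{def:altSigs}, and handle the mirror case by composing with a reflection (the paper re-derives the mirror signature directly where you cite Proposition~\ref{lemma:mirrorSig}, a negligible difference). The ``hardest step'' you flag --- that the lift is a genuine lattice symmetry --- is resolved in the paper by replacing the lifted homeomorphism with the unique isometry agreeing with it on the tiling's vertices.
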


\begin{proof}
Suppose the two trihexes $T_0$ and $T_1$ are isomorphic as graphs.
By a theorem of Whitney (\cite{whitney19332}, or see \cite{mohar2001graphs}), there is a homeomorphism of the sphere whose restriction to $T_0$ gives a graph isomorphism to $T_1$. Lift this homeomorphism to a map from the hexagonal tiling that covers $T_0$ to the hexagonal tiling that covers $T_1$.  This lifted map is a  homeomorphism of the hexagonally tiled plane that takes hexagons to hexagons and special hexagons to special hexagons. There is a unique isometry of the plane that agrees with the homeomorphism on all the vertices of the hexagonal tiling. The isometry is orientation preserving if and only if the original homeomorphism is.

If the isometry is orientation preserving, then it must be either a rotation by a multiple of $60^\circ$, or a translation, since these are the only isometries of the plane that preserve the hexagonal grid. A rotation by $180^\circ$ or $0^\circ$ or a translation takes vertical spines to vertical spines. A rotation by $60^\circ$ or $240^\circ$ counterclockwise takes vertical spines to NW to SE spines, and a rotation by $120^\circ$ or $300^\circ$ counterclockwise takes vertical spines to SW to NE spines. Therefore, $(s_0, f_0, b_0)$ must be equivalent to $(s_1, f_1, b_1)$. 

If the isometry is orientation reversing, then reflect the first hexagonal tiling through a vertical line centered at special hexagons. Consider the isometry from this reflected hexagonal tiling to the second hexagonal tiling formed as the composition of the  reflection followed by the original isometry. This composition gives an  orientation preserving isometry  from the mirror image of the first hexagonal tiling to the second hexagonal tiling. Therefore, the reflected hexagonal tiling, whose signature is $(s_0, b_0, s_0 - f_0 - b_0 \mod(s_0 + 1))$, has signature equivalent to $(s_1, b_1, f_1)$..

Conversely, suppose the signature $(s_1, b_1, f_1)$ is equivalent to the signature $(s_0, b_0, f_0)$. By  Theorem~\ref{thm:allTrihexesComeFromTilings}, both trihexes $T_0$ and $T_1$ arise as quotients of  hexagonal tilings under groups of isometries generated by $180^\circ$ rotations. Since $(s_0, b_0, f_0)$ and $(s_1, b_1, f_1)$ are equivalent signatures, the rotocenters of these rotations are the same, and so these isometry groups are the same. Therefore, the trihexes must be equivalent. If $(s_1, b_1, f_1)$ is equivalent to $(s_0, b_0, s_0 - f_0 - b_0 \mod (s_0+1))$, then the  grids of rotocenters for these rotations are mirror images of each other. Therefore, there is a reflection that takes one hexagonal tiling to the other,  takes special hexagons to special hexagons, and is preserved by the action of the rotation groups. This reflection projects to an orientation-reversing homeomorphism between quotient spheres that is a graph isomorphism between $T_0$ and $T_1$. 

The existence of bijections now follows from Theorem~\ref{thm:achieveAllSigs}, which says that every signature is realized by a trihex and every trihex has a signature.
\end{proof}

Table~\ref{tbl:alternativeSignatures} gives the signatures for all trihexes with 20 hexagons or fewer (44 vertices or fewer). Each row give the three equivalent signatures for a trihex. The three signatures are ordered so that the signature with the smallest value of $b$ is on the left, with preference given to the signature with smaller value of $f$ in case of a tie. In some cases, the alternative signatures are repetitions. For example, the alternative signatures for $(6, 0, 2)$ are $(6, 0, 2)$ and $(6, 0, 2)$.

\begin{table}
\begin{tabular}{|c|c|c|c|c|} \hline
$(s_1, b_1, f_1)$ & $(s_2, b_2, f_2)$ & $(s_3, b_3, f_3)$ & hexagons & vertices \\ \hline
(0, 0, 0) & (0, 0, 0) & (0, 0, 0) & 0 & 4 \\ 
(1, 0, 0) & (1, 0, 1) & (0, 1, 0) & 2 & 8 \\ 
(2, 0, 0) & (2, 0, 2) & (0, 2, 0) & 4 & 12 \\ 
(2, 0, 1) & (2, 0, 1) & (2, 0, 1) & 4 
 & 12\\ 
(3, 0, 0) & (3, 0, 3) & (0, 3, 0) & 6 & 16 \\ 
(3, 0, 1) & (3, 0, 2) & (1, 1, 1) & 6 & 16\\ 
(1, 1, 0) & (1, 1, 0) & (1, 1, 0) & 6 & 16\\ 
(4, 0, 0) & (4, 0, 4) & (0, 4, 0) & 8 & 20 \\ 
(4, 0, 1) & (4, 0, 2) & (4, 0, 3) & 8 & 20 \\ 
(5, 0, 0) & (5, 0, 5) & (0, 5, 0) & 10 & 24\\ 
(5, 0, 1) & (5, 0, 4) & (2, 1, 2) & 10 & 24\\ 
(5, 0, 2) & (2, 1, 0) & (1, 2, 1) & 10 & 24\\ 
(5, 0, 3) & (2, 1, 1) & (1, 2, 0) & 10 & 24\\ 
(6, 0, 0) & (6, 0, 6) & (0, 6, 0) & 12 & 28\\ 
(6, 0, 1) & (6, 0, 3) & (6, 0, 5) & 12 & 28\\ 
(6, 0, 2) & (6, 0, 2) & (6, 0, 2) & 12 & 28\\ 
(6, 0, 4) & (6, 0, 4) & (6, 0, 4) & 12 & 28\\ 
(7, 0, 0) & (7, 0, 7) & (0, 7, 0) & 14 & 32\\ 
(7, 0, 1) & (7, 0, 6) & (3, 1, 3) & 14 & 32\\ 
(7, 0, 2) & (7, 0, 5) & (3, 1, 1) & 14 & 32\\ 
(7, 0, 3) & (7, 0, 4) & (1, 3, 1) & 14 & 32\\ 
(3, 1, 0) & (3, 1, 2) & (1, 3, 0) & 14 & 32\\ 
(8, 0, 0) & (8, 0, 8) & (0, 8, 0) & 16 & 36\\ 
(8, 0, 1) & (8, 0, 4) & (8, 0, 7) & 16 & 36\\ 
(8, 0, 2) & (8, 0, 3) & (2, 2, 2) & 16 & 36\\ 
(8, 0, 5) & (8, 0, 6) & (2, 2, 1) & 16 & 36\\ 
(2, 2, 0) & (2, 2, 0) & (2, 2, 0) & 16 & 36\\ 
(9, 0, 0) & (9, 0, 9) & (0, 9, 0) & 18 & 40\\ 
(9, 0, 1) & (9, 0, 8) & (4, 1, 4) & 18 & 40\\ 
(9, 0, 2) & (9, 0, 3) & (4, 1, 2) & 18 & 40\\ 
(9, 0, 4) & (4, 1, 0) & (1, 4, 1) & 18 & 40\\ 
(9, 0, 5) & (4, 1, 3) & (1, 4, 0) & 18 & 40\\ 
(9, 0, 6) & (9, 0, 7) & (4, 1, 1) & 18 & 40\\ 
(10, 0, 0) & (10, 0, 10) & (0, 10, 0) & 20 & 44\\ 
(10, 0, 1) & (10, 0, 5) & (10, 0, 9) & 20 & 44\\ 
(10, 0, 2) & (10, 0, 4) & (10, 0, 7) & 20 & 44\\ 
(10, 0, 3) & (10, 0, 6) & (10, 0, 8) & 20 & 44\\  \hline
\end{tabular}
\caption{The three equivalent signatures for trihexes with 20 or fewer hexagons / 44 or fewer vertices.}
\label{tbl:alternativeSignatures}
\end{table}

\section{Convex vs. non-convex trihexes}
\label{sec:convexVsNonconvex}

The trihex signature determines whether it can arise from a convex polyhedron. We will first quote some preliminary facts. 

\begin{proposition}
\label{lemma:facts}
\begin{enumerate}[(i)]
\item Every trihex is 2-connected.
\item If a trihex is not 3-connected, then it is a godseye.
\item A trihex is a simple graph. 
\end{enumerate}
\end{proposition}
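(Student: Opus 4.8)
The plan is to prove the three parts in the order (i), (iii), (ii), drawing on three ingredients already available: the strong local constraints of $3$-regularity, the Euler computation of Section~\ref{sec:defs} giving exactly four triangular faces, and the realization of every trihex as two spines with $b$ belts and offset $f$ (Theorems~\ref{thm:achieveAllSigs} and~\ref{thm:allTrihexesComeFromTilings}). I would first dispose of loops: a loop at a vertex $v$ is a closed curve meeting the graph only at $v$, so the side of it not containing the third edge at $v$ is a face bounded by the single loop edge, a monogon, which is neither a triangle nor a hexagon.

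For (i), I would show that no cut vertex exists. In a cubic graph a cut vertex $v$ forces a \emph{bridge}: the three edges at $v$ are distributed among the $\ge 2$ components of $G-v$, each component must meet $v$ (else $G$ is already disconnected), and three edges among at least two components leaves some component attached to $v$ by a single edge. In a plane graph a bridge $e$ is incident to a single face $F$ whose boundary walk crosses $e$ twice, so $|F| = 2 + \ell_1 + \ell_2$, where $\ell_1,\ell_2$ are the lengths of the two sub-walks closing up around the two ends of $e$. Each sub-walk leaves an endpoint of $e$ by one of its other two edges and returns by the other; once loops and multiple edges are excluded these sub-walks have length at least $3$, forcing $|F|\ge 8$ and contradicting $|F|\in\{3,6\}$. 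Thus $2$-connectivity reduces to the absence of multiple edges.

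The remaining input for both (i) and (iii) is therefore that a trihex has no multiple edges, and this is the delicate point, because a pair of parallel edges $e_1,e_2$ between $u$ and $v$ is only \emph{locally} forbidden in the easy cases. The $2$-cycle they form bounds a disk $D$; if $D$ or its complement has empty interior it is a $2$-gon face, and if the third edges at $u$ and $v$ lie on opposite sides of the $2$-cycle then one of $u,v$ becomes a cut vertex, contradicting (i). The genuinely global case, where a nonempty subgraph hangs inside $D$ off a single vertex, is size-consistent locally, so I would close it either by a minimal-counterexample reduction (excising the interior blob and repairing degrees to obtain a smaller trihex carrying a bridge, which the paragraph above forbids) or, more cleanly, by appealing to the explicit spine--belt / hexagonal-tiling model of Theorem~\ref{thm:allTrihexesComeFromTilings}, in which distinct faces meet in at most one edge and simplicity is manifest. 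With no multiple edges, (i) and (iii) are complete.

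The heart of the proposition is (ii), and I expect the region analysis there to be the main obstacle. By (i) a trihex that is not $3$-connected is $2$-connected with a $2$-vertex-cut $\{z_1,z_2\}$; since face boundaries of a $2$-connected plane graph are cycles, such a cut lies on two distinct faces $F_1,F_2$ each containing both $z_1$ and $z_2$. Bookkeeping the four triangles (equivalently a discrete Gauss--Bonnet count in which each triangle carries the positive curvature and hexagons carry none), I would argue that the smaller region cut off by $\{z_1,z_2\}$ must collapse to a pair of triangles sharing an edge --- a ``cap'', i.e.\ a spine of length $0$ --- with $F_1,F_2$ the two hexagons of the innermost belt wrapping it (note that a triangle containing both $z_1,z_2$ would make them adjacent, which rules several configurations out). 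The delicate steps are proving that the cut forces \emph{adjacent} triangles rather than merely some positive curvature spread over hexagons, and, conversely, that no trihex with spine length $s\ge 1$ admits a $2$-cut at all, for which I would exhibit three internally disjoint paths between any two vertices by routing through the hexagons of a spine in the hexagonal-tiling cover and invoking Menger's theorem. Once a cap is isolated, peeling it together with its surrounding belts and invoking the structure theorem identifies the trihex as $(0,b,0)$ with $b\ge 1$; the excluded case $b=0$ is the tetrahedron, which is $3$-connected, so the non-$3$-connected trihexes are exactly the godseyes.
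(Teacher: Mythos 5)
Your route is genuinely different from the paper's, because the paper barely proves this proposition at all: parts (i) and (ii) are simply cited from \cite{deza2005zigzag}, and only part (iii) gets an argument, namely that a loop forces a monogon face and a double edge forces either a $2$-gon face or a pair of separating vertices, ``contradicting Part (i).'' Your treatment of loops and of the easy double-edge cases coincides with that, and your bridge-forces-a-long-face argument for (i) and your $2$-cut analysis for (ii) are a reasonable self-contained replacement for the citation. What your approach buys is independence from Deza--Dutour; what it costs is that you must actually carry out the region analysis in (ii) (forcing the cut-off region to be a cap, and proving $3$-connectivity when $s\ge 1$ via three disjoint paths), which you correctly flag as the real work but only outline.

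The one point to be careful about is the mutual dependence of (i) and (iii). The paper derives (iii) \emph{from} (i); you derive (i) from (iii), since your bound $|F|\ge 2+\ell_1+\ell_2\ge 8$ on the face containing a bridge fails if a sub-walk can close up along a parallel pair (a double edge at the far end of the bridge gives $\ell_2=2$ and allows $|F|=6$, a legitimate hexagon). So you cannot also invoke (i) to kill the hard double-edge case, and your first proposed repair --- excising the blob inside $D$ and ``repairing degrees'' --- does not obviously produce a smaller trihex (suppressing the resulting degree-$2$ vertex creates a loop, not a trihex). Your second repair is the right one: Theorem~\ref{thm:achieveAllSigs}(iii) (the Gr\"unbaum--Motzkin spine-and-belt decomposition, hence Theorem~\ref{thm:allTrihexesComeFromTilings}) is established in the paper independently of Proposition~\ref{lemma:facts}, and simplicity is manifest in that model. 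But once you lean on the structure theorem for (iii), you may as well read all three parts off it directly --- the $(s,b,f)$ models are visibly simple, the $(0,b,0)$ trihexes with $b\ge 1$ are exactly the godseyes, and only the $3$-connectivity of the $s\ge 1$ models still needs a Menger-type argument --- which would make the first half of your write-up redundant. Either commit to the fully intrinsic route (and then prove simplicity without (i)) or commit to the structure-theorem route; as written the proposal hedges between the two.
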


\begin{proof}

Parts (i) and (ii) are proved in \cite{deza2005zigzag}.




Part (iii): If a trihex had an edge loop, then it would have a face with only one edge. If it had a double edge, then it would either have a face with only two edges or else each of the vertices on the double edge would be a separating vertex, contradicting Part (i).
\end{proof}

\begin{theorem}
Any trihex with a signature $(0, b, 0)$ with $b > 0$ can be represented as the skeleton of a non-convex polyhedron, and it cannot be represented as the skeleton of a convex polyhedron. All other trihexes can be represented as skeletons of convex polyhedra.

\end{theorem}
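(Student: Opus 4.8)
The plan is to reduce the whole statement to Steinitz's theorem, which says that a finite graph is the $1$-skeleton of a convex $3$-polytope if and only if it is simple, planar, and $3$-connected \cite{ziegler2012lectures}. First I would record that the trihexes carrying a signature $(0,b,0)$ with $b>0$ are exactly the godseyes: when $s=0$ the constraint $0\le f\le s$ forces $f=0$, and building from two spines of length $0$ (each a pair of triangles sharing an edge) together with $b$ belts of $2s+2=2$ hexagons reproduces precisely the inner triangle pair, the $b$ nested pairs of hexagons, and the outer triangle pair of the definition of a godseye; conversely a godseye decomposes this way and so has signature $(0,b,0)$ by Theorem~\ref{thm:achieveAllSigs}. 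Thus the theorem is the assertion that a trihex fails to be convex exactly when it is a godseye, which I will establish by showing that godseyes are precisely the trihexes that are not $3$-connected.

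For the trihexes that are \emph{not} godseyes, the contrapositive of Proposition~\ref{lemma:facts}(ii) shows they are $3$-connected; they are simple by Proposition~\ref{lemma:facts}(iii), planar by definition, and have at least four vertices. Steinitz's theorem then produces a convex $3$-polytope with that skeleton, and since a $3$-connected planar graph has an essentially unique embedding in the sphere (Whitney, \cite{whitney19332}), the faces of this polytope are exactly the triangular and hexagonal faces of the trihex, so it is a convex trihex polyhedron; this settles the last sentence of the theorem. For the impossibility half I would invoke the necessity direction of Steinitz's theorem: the skeleton of any convex $3$-polytope is $3$-connected. Hence it suffices to prove that a godseye is \emph{not} $3$-connected.

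To exhibit the required cut, consider the inner spine of length $0$ of a godseye: two triangles sharing an edge $\{u,v\}$, with head vertex $p$ and tail vertex $q$ (the two tips). Each of $p$ and $q$ is incident to the innermost belt, whereas $u$ and $v$ are adjacent only to each other and to $p$ and $q$. Consequently, deleting $\{p,q\}$ leaves the edge $\{u,v\}$ as an isolated component, and because $b>0$ the remaining belts and the outer cap are nonempty, so the deletion disconnects the graph. Therefore $\{p,q\}$ is a $2$-vertex cut, the godseye is not $3$-connected, and by the previous paragraph it cannot be the skeleton of a convex polyhedron.

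The remaining and hardest step is to realize each godseye as \emph{some} (necessarily non-convex) polyhedron. Here I would give a direct geometric construction: realize the $2b$ hexagons as a barrel-shaped tube around a vertical axis and cap each end with its pair of triangles folded inward to create a visible dimple, making the closed surface manifestly non-convex. The point demanding care is that within every belt the two hexagons share four vertices (they meet along two opposite edges), so they cannot both be flat without being coplanar; a realization must therefore either use the paper's convention that faces need not be planar, or place those four vertices collinearly so that the two hexagons become non-convex planar polygons meeting along a line at a nonzero dihedral angle. I expect the main obstacle to be verifying that an explicit vertex placement yields an \emph{embedded} topological sphere (faces meeting only in shared vertices and edges, with no spurious self-intersections) while staying non-convex; once coordinates are fixed this is a finite check that can be organized belt by belt using the evident rotational symmetry of the construction.
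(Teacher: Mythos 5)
Your proposal is correct and follows essentially the same route as the paper: Steinitz's theorem in both directions, the identification of the $(0,b,0)$ trihexes with godseyes, the explicit $2$-vertex cut at the two triangle tips to show godseyes are not $3$-connected, and Proposition~\ref{lemma:facts} for the $3$-connectivity and simplicity of all other trihexes. Your final step (an explicit embedded non-convex realization of each godseye) is only sketched, but the paper itself settles that point by exhibiting figures, and your remark that the two hexagons of a belt share four vertices and so cannot both be flat convex polygons is a genuine subtlety the paper glosses over.
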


\begin{proof}
Steinitz's Theorem \cite{grunbaum1967convex} or \cite{ziegler2012lectures}, says that a graph can be represented as the skeleton of a convex polyhedron if and only if the graph is simple, planar, and 3-connected. Trihexes with signature $(0, b, 0)$ with $b > 0$ are godseyes, which are not 3-connected: removing the two vertices shown in red in Figure~\ref{fig:godseye} disconnects the graph. Therefore they are not the skeletons of convex polyhedra. However, they are the skeletons of non-convex polyhedra, as shown in Figure~\ref{fig:nonConvexPolyhedra}.
All other trihexes are 3-connected, simple, planar graphs by Lemma~\ref{lemma:facts}. So by Steinitz's theorem, they are the skeletons of convex polyhedra.
\end{proof}

\begin{figure}
    \begin{subfigure}{.5\textwidth}
        \centering
    \includegraphics[height=6cm]{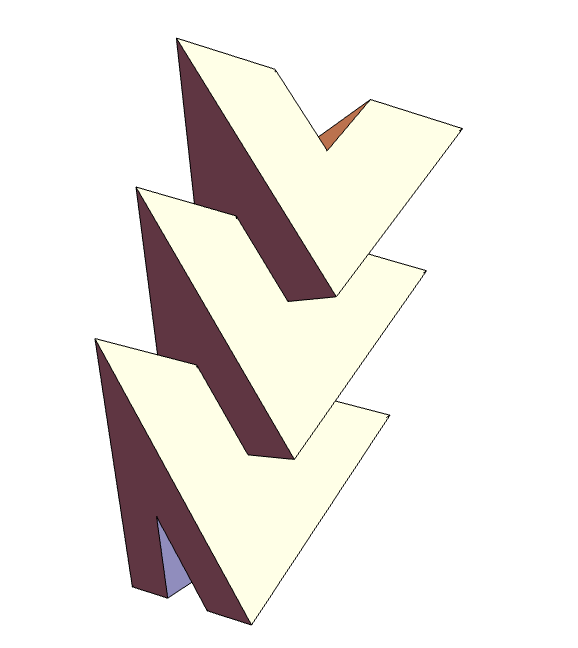}
    \caption{Godseye with six pairs of hexagons}
    \end{subfigure}
    \begin{subfigure}{.5\textwidth}
        \centering
    \includegraphics[height=6cm]{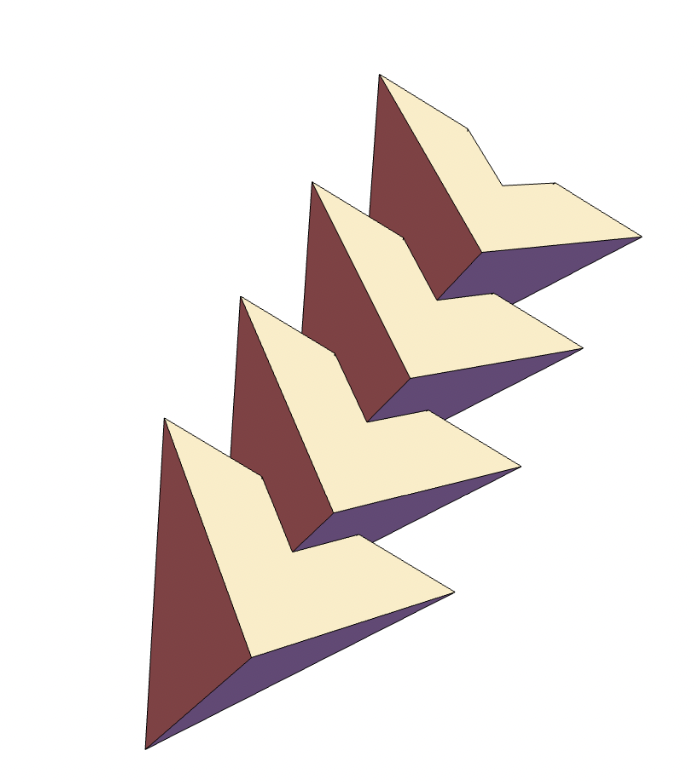}
    \caption{Godseye with seven pairs of hexagons}
    \end{subfigure}
\caption{Godseyes can be realized as non-convex polyhedra.}
\label{fig:nonConvexPolyhedra}
\end{figure}

\section{How many trihexes of each size?}

\label{sec:howMany}

Given $v \geq 0$, how many trihexes are there with $v$ vertices? Let $\alpha(v)$ be the number of equivalence classes of trihexes with $v$ vertices and let $\beta(v)$ be the number of graph isomorphism classes of trihexes with $v$ vertices. These two quantities can be computed by counting signatures and accounting for duplicates, after establishing the following relationships, which are also evident in \cite{grunbaum_motzkin_1963}.

\begin{lemma}
\begin{enumerate}[(i)]
    \item The triple $(s, b, f)$ is a signature for a trihex with $h$ hexagons if and only if $s \geq 0$, $b \geq 0$, $0 \leq f \leq s$, and $\dfrac{h}{2} + 1 =(s+1)(b+1)$.

    \item The triple $(s, b, f)$ is a signature for a trihex with $v$ vertices if and only if $s \geq 0$,  $b \geq 0$, $0 \leq f \leq s$, and $\dfrac{v}{4} = (s+1)(b+1)$.

    \item Consequently, the number of hexagons in a trihex is even and the  number of vertices of a trihex is divisible by 4. 
\end{enumerate}
\end{lemma}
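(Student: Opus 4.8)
The plan is to reduce all three parts to two elementary counting formulas already in hand for a trihex built from two spines of length $s$ with $b$ belts between them: the hexagon count $h = 2s + b(2s+2)$ derived in Section~\ref{sec:equivalent signatures}, and the Euler-characteristic relation from Section~\ref{sec:defs}, where it was shown that every trihex has exactly $f_3 = 4$ triangular faces, so that $V = 2f_6 + f_3$ gives $v = 2h + 4$. Combined with Theorem~\ref{thm:achieveAllSigs}, which guarantees both that every admissible triple is realized by a trihex and that every trihex carries a signature, these two identities do all the work.

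For part (i) I would prove both directions of the biconditional. In the forward direction, suppose $(s,b,f)$ is a signature of a trihex with $h$ hexagons. The inequalities $s \geq 0$, $b \geq 0$, and $0 \leq f \leq s$ are part of the definition of a signature, while the hexagon-count formula gives $h = 2s + b(2s+2) = 2s + 2sb + 2b$. A short computation then yields
\[
\dfrac{h}{2} + 1 = s + sb + b + 1 = (s+1)(b+1).
\]
For the converse, suppose the triple satisfies the stated inequalities together with $\dfrac{h}{2}+1 = (s+1)(b+1)$. By Theorem~\ref{thm:achieveAllSigs}(i) there is a trihex with signature $(s,b,f)$, and by the same computation its hexagon count $h'$ satisfies $\dfrac{h'}{2}+1 = (s+1)(b+1) = \dfrac{h}{2}+1$; hence $h' = h$, so $(s,b,f)$ is indeed a signature for a trihex with $h$ hexagons.

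Part (ii) then follows immediately by substituting $v = 2h + 4$: dividing by $4$ gives $\dfrac{v}{4} = \dfrac{h}{2}+1$, which by part (i) equals $(s+1)(b+1)$, while the inequality constraints are untouched. Part (iii) is a consequence of the equations in (i) and (ii) together with the guarantee from Theorem~\ref{thm:achieveAllSigs} that every trihex has a signature: since $s$ and $b$ are nonnegative integers, $(s+1)(b+1)$ is a positive integer, so $\dfrac{h}{2} = (s+1)(b+1) - 1$ shows that $\dfrac{h}{2}$ is an integer and hence $h$ is even, and $\dfrac{v}{4} = (s+1)(b+1)$ shows that $4$ divides $v$.

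There is no serious obstacle here; the entire content is the two counting identities. The only point requiring a little care is the ``only if'' direction of (i): I must ensure that the hexagon count of a trihex is genuinely determined by the spine-and-belt data of \emph{any} of its signatures, so that the formula $h = 2s + b(2s+2)$ applies regardless of which of the (possibly three) equivalent signatures is chosen. This is consistent precisely because $h$ is an invariant of the trihex, so the equation $\dfrac{h}{2}+1 = (s+1)(b+1)$ must hold simultaneously for each equivalent signature — a consistency already visible in Table~\ref{tbl:alternativeSignatures}.
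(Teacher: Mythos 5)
Your proposal is correct and follows essentially the same route as the paper: both rest on the count $h = 2s + b(2s+2) = 2(s+1)(b+1) - 2$ for a spine-and-belt decomposition and the vertex count $v = \tfrac{6h+12}{3} = 2h+4$ coming from the fact that every trihex has exactly four triangles. You are merely a little more explicit than the paper in separating the two directions of the biconditional and in invoking Theorem~\ref{thm:achieveAllSigs} for the converse, which is a reasonable bit of added care but not a different argument.
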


\begin{proof}
A trihex with signature $(s, b, f)$ has $2s + b(2s+2) = 2(s+1)(b+1) -2 $ hexagons, since each of the two spines contains $s$ hexagons and each of the $b$ belts contains $2s+2$ hexagons. So for $s \geq 0$, $b \geq 0$, $0 \leq f \leq s$, the triple $(s, b, f)$ is a signature for a trihex with $h$ hexagons if and only if $\dfrac{h}{2} = (s + 1)(b+ 1)  - 1$. Since each hexagon has six vertices and each of the four triangles in a trihex has four vertices, the number of vertices in a trihex is $v = \dfrac{6h + 12}{3} = 2h + 4 = 4(s+1)(b+1)$. So the triple $(s, b, f)$ with $s \geq 0$, $b \geq 0$, and $0 \leq f \leq s$ is a signature for a trihex with $v$ vertices if and only if $\dfrac{v}{4} = (s+1)(b+1)$. Since $\dfrac{h}{2}$ and $\dfrac{v}{4}$ are integers, $h$ is divisiby by 2 and $v$ is divisible by 4.
\end{proof}

From this relationship, we can compute $\alpha(v)$ and $\beta(v)$ as follows. For each desired value of $v$, we can find the list of all triples $(s, b, f)$ satisfying the inequalities $s \geq 0$, $b \geq 0$, and $0 \leq f \leq s$ and the equation $\dfrac{v}{4} = (s+1)(b+1)$. 
Then we can test which triples satisfy the relationships in Definition~\ref{def:altSigs} and therefore represent equivalent trihexes. Taking these equivalences into account yields $\alpha(v)$. Checking for triples that satisfy the mirror equivalence relation in Theorem~\ref{lemma:mirrorImages} yields $\beta(v)$. See Table~\ref{tbl:numberOfTrihexes} for counts of $\alpha(v)$ and $\beta(v)$ for $v \leq 200$.

Let $\sigma(v)$ be the number of triples that form a signature for a trihex with $v$ vertices; that is, the number of triples $(s, b, f)$ with $s \geq 0$, $b \geq 0$, $0 \leq f \leq s$, and $v = 4(s+1)(b+1)$. 

\begin{lemma}
    Let $p_1^{m_1} p_2^{m_2} \cdot p_k^{m_k}$ be the prime factorization of $\dfrac{v}{4}$. Then $\displaystyle{\sigma(v) = \prod_{i = 1}^k \dfrac{p_i^{m_i + 1} - 1}{p_i - 1}}$.
\end{lemma}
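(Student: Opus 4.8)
The plan is to recognize that $\sigma(v)$ is exactly the sum-of-divisors function evaluated at $N := \frac{v}{4}$, and then to invoke the standard multiplicative product formula for that arithmetic function.

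First I would reduce the count to a sum over divisors. By part (ii) of the preceding lemma, a triple $(s, b, f)$ contributes to $\sigma(v)$ precisely when $s \geq 0$, $b \geq 0$, $0 \leq f \leq s$, and $(s+1)(b+1) = N$. I would parametrize these triples by the value $d := s + 1$, which ranges over the positive divisors of $N$. For a fixed divisor $d$, the factor $b + 1 = N/d$ is forced (and is automatically a positive integer), so $s$ and $b$ are determined; the only remaining freedom is in $f$, which ranges over the $s + 1 = d$ values $0, 1, \ldots, s$. Hence each divisor $d$ of $N$ contributes exactly $d$ triples, and summing over all divisors gives
\[
\sigma(v) = \sum_{d \mid N} d.
\]

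Second, I would establish the product formula for $\sum_{d \mid N} d$. The key fact is that this function is multiplicative: writing $N = p_1^{m_1} p_2^{m_2} \cdots p_k^{m_k}$, every divisor $d$ of $N$ factors uniquely as $d = p_1^{j_1} \cdots p_k^{j_k}$ with $0 \leq j_i \leq m_i$, so the sum over divisors factors as a product of independent geometric sums,
\[
\sum_{d \mid N} d = \prod_{i=1}^{k} \left( \sum_{j=0}^{m_i} p_i^{\,j} \right) = \prod_{i=1}^{k} \frac{p_i^{m_i + 1} - 1}{p_i - 1},
\]
where the last equality is the closed form for a finite geometric series. Combining this with the first step yields the claimed formula.

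The content here is entirely standard number theory, so I do not anticipate a genuine obstacle; the only point requiring care is the bookkeeping in the first step, namely verifying that the map $d \mapsto (s, b) = (d - 1,\, N/d - 1)$ is a bijection between divisors of $N$ and admissible $(s, b)$ pairs, and that for each such pair the index $f$ contributes exactly $s + 1 = d$ choices. Once that tally is confirmed, the identity $\sigma(v) = \sum_{d \mid N} d$ is immediate and the product formula follows.
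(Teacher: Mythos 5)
Your proposal is correct and follows essentially the same route as the paper: both parametrize the admissible pairs $(s,b)$ by the divisor $d = s+1$ of $\frac{v}{4}$, observe that each divisor contributes $d$ choices of $f$, and conclude that $\sigma(v)$ is the sum of divisors of $\frac{v}{4}$, equal to the stated product. You spell out the geometric-series factorization slightly more explicitly than the paper, which simply cites the sum-of-divisors formula, but the argument is the same.
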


\begin{proof}
    The set of pairs $(s, b)$ that satisfy (i) $s \geq 0$. (ii) $b \geq 0$,  and (iii) $v = 4(s+1)(b+1)$ is in one-to-one correspondence with the factors of $\dfrac{v}{4}$, by the correspondence that takes a factor $d$ to the pair $(s, b) = \left(d-1, \dfrac{v}{4d} - 1\right)$. For each such pair $(s, b)$, corresponding to the factor $d = s+1$ of $\dfrac{v}{4}$, there are $s+1$ triples $(s, b, f)$ that satisfy (iv) $0 \leq f \leq s$. Therefore, the number of triples $(s, b, f)$ that satisfy (i), (ii), (iii), and (iv) is equal to the sum of the factors $d$ of $\dfrac{v}{4}$. This sum is equal to $\displaystyle{\prod_{i = 1}^k \dfrac{p_i^{m_i + 1} - 1}{p_i - 1}}$.
\end{proof}

\begin{proposition}
Let $p_1^{m_1} p_2^{m_2} \cdot p_k^{m_k}$ be the prime factorization of $\dfrac{v}{4}$. Then

$$\dfrac{1}{3} \prod_{i = 1}^k \dfrac{p_i^{m_i + 1} - 1}{p_i - 1} \leq \alpha(v) \leq \prod_{i = 1}^k \dfrac{p_i^{m_i + 1} - 1}{p_i - 1}$$ and 
    $$\dfrac{1}{6} \prod_{i = 1}^k \dfrac{p_i^{m_i + 1} - 1}{p_i - 1} \leq \beta(v) \leq \prod_{i = 1}^k \dfrac{p_i^{m_i + 1} - 1}{p_i - 1}$$
\end{proposition}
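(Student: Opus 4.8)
The plan is to count the $\sigma(v)$ signatures by sorting them into equivalence classes (for $\alpha$) and into graph-isomorphism classes (for $\beta$), and then to exploit the bounds on the sizes of these classes that follow from the structure theory already developed. Once the possible class sizes are pinned down, the whole argument reduces to an elementary averaging.

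First I would handle $\alpha(v)$. By Theorem~\ref{lemma:mirrorImages}(i), equivalence classes of trihexes with $v$ vertices are in bijection with equivalence classes of signatures $(s,b,f)$ satisfying $\frac{v}{4} = (s+1)(b+1)$, of which there are $\sigma(v)$ in total. The discussion following Definition~\ref{def:altSigs} shows that every such equivalence class consists of either one signature (when $(s_1,b_1,f_1)$, $(s_2,b_2,f_2)$, $(s_3,b_3,f_3)$ all coincide) or exactly three distinct signatures, with two-element classes never occurring. Writing $n_1$ and $n_3$ for the number of one- and three-element classes, I have $\sigma(v) = n_1 + 3 n_3$ and $\alpha(v) = n_1 + n_3$. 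Then $\alpha(v) \le \sigma(v)$ is immediate, and $3\alpha(v) = 3n_1 + 3n_3 \ge n_1 + 3n_3 = \sigma(v)$ gives $\alpha(v) \ge \frac{1}{3}\sigma(v)$, which are the two desired bounds since $\sigma(v) = \prod_{i=1}^{k} \frac{p_i^{m_i+1}-1}{p_i-1}$.

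For $\beta(v)$ I would bring in the mirror operation. Proposition~\ref{lemma:mirrorSig} sends $(s,b,f)$ to $(s,b,\, s-f-b \bmod (s+1))$, and this map is an involution on the set of signatures with $v$ vertices that respects the equivalence relation of Definition~\ref{def:altSigs}; hence it descends to an involution on signature equivalence classes, and, being a bijection, it carries one-element classes to one-element classes and three-element classes to three-element classes. By Theorem~\ref{lemma:mirrorImages}(ii), graph-isomorphism classes of trihexes correspond precisely to the orbits of this involution. An orbit is either a single fixed equivalence class (an achiral trihex) or a pair of distinct equivalence classes swapped by the mirror map (a chiral pair); in the latter case the two classes have equal size because the involution preserves size. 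Consequently each graph-isomorphism class accounts for $1$ or $3$ signatures when achiral, and $2$ or $6$ signatures when chiral. Grouping the classes by these four sizes with counts $m_1, m_2, m_3, m_6$, I get $\sigma(v) = m_1 + 2m_2 + 3m_3 + 6m_6$ and $\beta(v) = m_1 + m_2 + m_3 + m_6$, so $\beta(v) \le \sigma(v)$, while $6\beta(v) = 6m_1 + 6m_2 + 6m_3 + 6m_6 \ge m_1 + 2m_2 + 3m_3 + 6m_6 = \sigma(v)$ yields $\beta(v) \ge \frac{1}{6}\sigma(v)$.

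I do not expect a genuine obstacle here; the one step I would verify most carefully is the claim that a chiral orbit pairs two equivalence classes of the \emph{same} size, so that a chiral pair of three-element classes contributes exactly $6$ (rather than some value like $4$ or $5$) to $\sigma(v)$. This is precisely what the size-preserving property of the mirror involution guarantees, and it is what makes the constant $\tfrac{1}{6}$ in the lower bound sharp. Everything else is the bookkeeping recorded above.
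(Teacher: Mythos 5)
Your proof is correct and takes essentially the same approach as the paper: bound each signature-equivalence class by $3$ (for $\alpha$) and each graph-isomorphism class by $6$ (for $\beta$) and average over the $\sigma(v)$ signatures. You are in fact more explicit than the paper's own two-line argument, spelling out the possible class sizes ($1$ or $3$, and $1,2,3,6$) and why the mirror involution preserves class size, which the paper leaves implicit.
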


\begin{proof}

If each of the triples $(s, b, f)$ with $s \geq 0$, $b \geq 0$, $0 \leq f \leq s$, and $(s+1)(b+1) = \dfrac{v}{4}$  represented a distinct trihex, there would be $\sigma(v)$ distinct trihexes with $v$ vertices. However,  some of these triples represent equivalent trihexes, since a trihex can be decomposed in three ways into spines. Therefore $\dfrac{\sigma(v)}{3} \leq \alpha(v) \leq \sigma(v)$. There could be up to six signatures that represent isomorphic trihexes, since the left-handed and right-handed versions of chiral trihexes each have their own three signatures.   So
$\dfrac{\sigma(v)}{6} \leq \beta(v) \leq \sigma(v)$.  

\end{proof}

Table~\ref{tbl:numberOfTrihexes} gives the number $\alpha(v)$ of equivalence classes of trihexes and the number $\beta(v)$ of graph isomorphism classes of trihexes for each number $v$ of vertices for $0 \leq v \leq 200$. Figure~\ref{fig:trihexCount} presents the same information graphically for $0 \leq v \leq 400$. Note that the counts in the $\beta(v)$ column of Table~\ref{tbl:numberOfTrihexes} are always one greater than the counts in Table 5 of \cite{deza2005zigzag}, since our counts include non-convex godseyes, and there is exactly one godseye for each possible number of vertices. 

For each line in  Table~\ref{tbl:numberOfTrihexes}, the count $\alpha(v)$ is close to $\left \lceil \dfrac{\sigma(v)}{3} \right\rceil$ and $\beta(v)$ is fairly close to $\left \lceil\dfrac{\sigma(v)}{6}\right \rceil$. For $200 \leq v \leq 4000$, the difference $\alpha(v) - \left\lceil \dfrac{\sigma(v)}{3} \right\rceil$ is greater than one only 2.5\% of the time and is never more than four. For $200 \leq v \leq 4000$, $\left(\dfrac{\sigma(v)}{3} \right) \leq \alpha(v) \leq 1.06 \left( \dfrac{\sigma(v)}{3} \right)$.  For $200 \leq v \leq 4000$, the difference $\beta(v) - \left \lceil \dfrac{\sigma(v)}{6} \right \rceil$ is greater than one 36.2\% of the time and has a maximum value of 22. For $200 \leq v \leq 4000$, $\left( \dfrac{\sigma(v)}{6} \right) \leq \beta(h) \leq 1.25 \left( \dfrac{\sigma(v)}{6} \right) $.

\begin{conjecture}
As $v \to \infty$, the counts $\alpha(v)$ and $\beta(v)$ are respectively asymptotic to $\dfrac{1}{3}\sigma(v)$ and $\dfrac{1}{6} \sigma(v)$.
\end{conjecture}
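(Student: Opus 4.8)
The plan is to translate the problem into counting sublattices of the triangular lattice up to a finite symmetry group, and then to apply Burnside's lemma. By Theorem~\ref{thm:allTrihexesComeFromTilings} every trihex with $v$ vertices is the quotient of the hexagonal tiling by the $180^\circ$-rotation group based at the vertices of a parallelogram grid. Let $\Lambda$ be the triangular lattice of all hexagon centers and let $L\subseteq\Lambda$ be the sublattice of special hexagons. The construction in Theorem~\ref{thm:allTrihexesComeFromTilings} shows that a signature $(s,b,f)$ produces a sublattice of index $[\Lambda:L]=(s+1)(b+1)=v/4=:n$, and that every index-$n$ sublattice arises this way. The proof of Theorem~\ref{lemma:mirrorImages} shows that two sublattices give equivalent trihexes exactly when a symmetry of $\Lambda$ fixing a special hexagon carries one to the other: the three equivalent signatures are the images of $L$ under the order-$3$ rotation group (rotating the ``vertical'' direction by $0^\circ,60^\circ,120^\circ$), and mirror images correspond to the reflections. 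Since the $180^\circ$ rotation fixes every sublattice, the effective orientation-preserving group is the cyclic group $C_3$ and the effective full group is $S_3\cong D_3$ (the point group of $\Lambda$ modulo $\{\pm1\}$). Thus $\alpha(v)$ is the number of $C_3$-orbits and $\beta(v)$ the number of $S_3$-orbits on the set $X_n$ of index-$n$ sublattices of $\Lambda$.

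The number of index-$n$ sublattices of a rank-$2$ lattice is $\sigma_1(n)=\sum_{d\mid n}d$ (for instance via Hermite normal form), which is exactly $\sigma(v)$. Burnside's lemma then gives
\begin{align}
\alpha(v) = \tfrac{1}{3}\bigl(\sigma(v)+2F_3(n)\bigr), &\qquad \beta(v)=\tfrac{1}{6}\bigl(\sigma(v)+2F_3(n)+3F_2(n)\bigr),
\end{align}
where $F_3(n)$ counts index-$n$ sublattices invariant under the order-$3$ rotation and $F_2(n)$ counts those invariant under a reflection. (The two rotations share the same fixed set, and the three reflections are conjugate in $S_3$, hence have equal fixed-point counts.) It therefore suffices to prove $F_3(n)=o(\sigma(v))$ and $F_2(n)=o(\sigma(v))$, after which the displayed formulas immediately yield $\alpha(v)\sim\tfrac{1}{3}\sigma(v)$ and $\beta(v)\sim\tfrac{1}{6}\sigma(v)$.

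Both bounds follow from $\sigma(v)=\sigma_1(n)\geq n$ together with elementary estimates. A sublattice fixed by the order-$3$ rotation is a $\mathbb{Z}[\omega]$-submodule of $\Lambda\cong\mathbb{Z}[\omega]$ (Eisenstein integers), i.e.\ an ideal; since $\mathbb{Z}[\omega]$ is a PID, such sublattices of index $n$ correspond, up to the six units, to $\alpha\in\mathbb{Z}[\omega]$ with $|\alpha|^2=n$, so $F_3(n)$ is $\tfrac{1}{6}$ the number of representations of $n$ by $x^2+xy+y^2$, which is $O(d(n))=O(n^{\varepsilon})$. For a reflection $\tau$ with $\pm1$-eigenlines $V_\pm$, an invariant $L$ has $L\cap V_+$ and $L\cap V_-$ both of rank $1$ with $(L\cap V_+)\oplus(L\cap V_-)\subseteq L$ of index at most $2$; each $L\cap V_\pm$ is determined by a single positive integer, and their product is $n$ up to a bounded factor, so $F_2(n)=O(d(n))=O(n^{\varepsilon})$. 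Since $d(n)=O(n^{\varepsilon})$ while $\sigma(v)\geq n$, both $F_3(n)/\sigma(v)$ and $F_2(n)/\sigma(v)$ tend to $0$ as $v\to\infty$, completing the argument.

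The main obstacle is making the dictionary exact: one must verify that signature equivalence as defined algorithmically in Definition~\ref{def:altSigs} coincides on the nose with the $C_3$-action on sublattices, and that the reflections of $S_3$ -- which, as elements of $D_6/\{\pm1\}$, may mix the two $D_6$-classes of lattice reflections -- still have only $O(n^{\varepsilon})$ fixed sublattices; this last point is robust, since any lattice involution (a pure reflection, or a reflection composed with the $180^\circ$ rotation) admits the same eigenline decomposition. A fully self-contained alternative that avoids lattices is to argue directly with signatures: writing $A_1(v)$ for the number of triples $(s,b,f)$ whose three equivalent signatures coincide, one has $\alpha(v)=\tfrac{1}{3}\bigl(\sigma(v)+2A_1(v)\bigr)$, and $A_1(v)$ can be bounded by analyzing the multiplicative-order conditions on $f$ in $\mathbb{Z}_{s+1}$ coming from Definition~\ref{def:altSigs}. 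This reproduces the representation-number estimate above but is more computational, so I would present the lattice version as the primary proof.
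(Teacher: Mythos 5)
The paper states this result only as a conjecture and offers no proof, so there is nothing internal to compare against; what you have written is, as far as I can check, a correct and essentially complete proof strategy that would settle the conjecture. The dictionary you need is already implicit in the paper: the paragraph following Definition~\ref{def:altSigs} identifies the three equivalent signatures with the three descriptions of one arrangement of special hexagons under rotation of the ``vertical'' direction by $0$, $60$, or $120$ degrees, and Theorem~\ref{lemma:mirrorImages} identifies trihex equivalence (resp.\ graph isomorphism) with signature equivalence (resp.\ mirror equivalence); since $-1$ acts trivially on sublattices, this is exactly the $C_3$- (resp.\ $S_3$-) action on index-$n$ sublattices of the triangular lattice, $n=v/4$, and the count $\sigma(v)=\sigma_1(n)$ matches the paper's Lemma in Section~\ref{sec:howMany}. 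Your Burnside identities $\alpha(v)=\tfrac13\bigl(\sigma(v)+2F_3(n)\bigr)$ and $\beta(v)=\tfrac16\bigl(\sigma(v)+2F_3(n)+3F_2(n)\bigr)$ can be checked against Table~\ref{tbl:numberOfTrihexes}: e.g.\ for $v=16$, $n=4$, one has $\sigma_1(4)=7$, $F_3(4)=1$ (the ideal $(2)$ of $\mathbb{Z}[\omega]$), $F_2(4)=3$, giving $\alpha=3$, $\beta=3$ as tabulated; similarly $v=24$ gives $F_3(6)=0$, $F_2(6)=2$, $\alpha=4$, $\beta=3$. The fixed-point bounds are sound: rotation-invariant sublattices are ideals of the Eisenstein integers, counted by $\sum_{d\mid n}\left(\frac{d}{3}\right)\leq d(n)$, and your eigenline decomposition for reflections gives $F_2(n)=O(d(n))$, while $\sigma_1(n)\geq n$ forces both ratios to $0$. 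The only items to write out carefully in a final version are (a) the explicit bijection between triples $(s,b,f)$ and Hermite-normal-form bases of index-$n$ sublattices (the counts already agree, and the construction in the proof of Theorem~\ref{thm:allTrihexesComeFromTilings} essentially exhibits the basis), and (b) the observation that each coset of $\{\pm1\}$ in $D_6$ is fixed-point-equivalent to either a rotation of order $3$ or a genuine reflection, which you have already noted is harmless. In short: this is not the paper's approach --- the paper has none --- and your lattice-orbit argument appears to prove the conjecture.
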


\begin{table}
\begin{tabular}{|ccccc||ccccc|} \hline
$v$ & $\alpha(v)$ & $\lceil \frac{\sigma(v)}{3} \rceil$ &  $\beta(v)$ & $\lceil \frac{\sigma(v)}{6} \rceil$ & $v$ & $\alpha(v)$ & $\lceil \frac{\sigma(v)}{3} \rceil$ &  $\beta(v)$ & $\lceil \frac{\sigma(v)}{6} \rceil$\\ \hline
4	&	1	&	1	&	1	&	1	&	104	&	14	&	14	&	8	&	7 \\
8	&	1	&	1	&	1	&	1	&	108	&	14	&	14	&	9	&	7	\\
12	&	2	&	2	&	2	&	1	&	112	&	20	&	19	&	13	&	10	\\
16	&	3	&	3	&	3	&	2	&	116	&	10	&	10	&	6	&	5	\\
20	&	2	&	2	&	2	&	1	&	120	&	24	&	24	&	14	&	12	\\
24	&	4	&	4	&	3	&	2	&	124	&	12	&	11	&	7	&	6	\\
28	&	4	&	3	&	3	&	2	&	128	&	21	&	21	&	15	&	11	\\
32	&	5	&	5	&	5	&	3	&	132	&	16	&	16	&	10	&	8	\\
36	&	5	&	5	&	4	&	3	&	136	&	18	&	18	&	10	&	9	\\
40	&	6	&	6	&	4	&	3	&	140	&	16	&	16	&	10	&	8	\\
44	&	4	&	4	&	3	&	2	&	144	&	31	&	31	&	20	&	16	\\
48	&	10	&	10	&	8	&	5	&	148	&	14	&	13	&	8	&	7	\\
52	&	6	&	5	&	4	&	3	&	152	&	20	&	20	&	11	&	10	\\
56	&	8	&	8	&	5	&	4	&	156	&	20	&	19	&	12	&	10	\\
60	&	8	&	8	&	6	&	4	&	160	&	30	&	30	&	20	&	15	\\
64	&	11	&	11	&	9	&	6	&	164	&	14	&	14	&	8	&	7	\\
68	&	6	&	6	&	4	&	3	&	168	&	32	&	32	&	18	&	16	\\
72	&	13	&	13	&	8	&	7	&	172	&	16	&	15	&	9	&	8	\\
76	&	8	&	7	&	5	&	4	&	176	&	28	&	28	&	17	&	14	\\
80	&	14	&	14	&	10	&	7	&	180	&	26	&	26	&	16	&	13	\\
84	&	12	&	11	&	8	&	6	&	184	&	24	&	24	&	13	&	12	\\
88	&	12	&	12	&	7	&	6	&	188	&	16	&	16	&	9	&	8	\\
92	&	8	&	8	&	5	&	4	&	192	&	42	&	42	&	28	&	21	\\
96	&	20	&	20	&	15	&	10	&	196	&	21	&	19	&	12	&	10	\\
100	&	11	&	11	&	7	&	6	&	200	&	31	&	31	&	17	&	16		\\ \hline
\end{tabular}
\caption{Count $\alpha(v)$ of equivalence classes of trihexes  and count $\beta(v)$ of graph isomorphism classes of trihexes.  Here $\sigma(v)$ is the sum of the factors of $\dfrac{v}{4}$.}
\label{tbl:numberOfTrihexes}
\end{table}

\begin{figure}
    \centering
    \includegraphics[height = 8 cm]{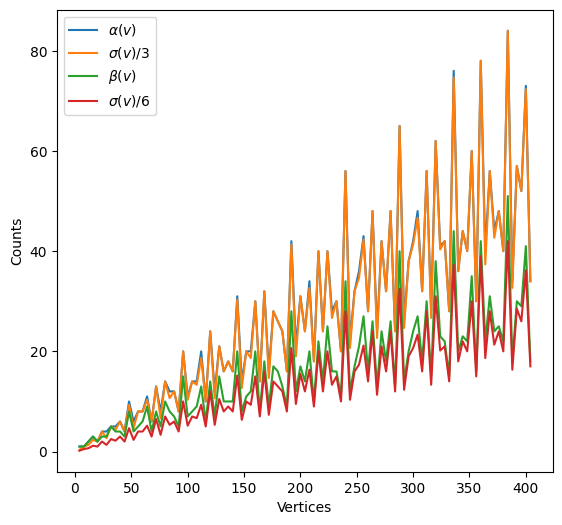}
    \caption{The number of distinct trihexes that contain $v$ vertices.}
    \label{fig:trihexCount}
\end{figure}

\section{Tight trihexes}
\label{sec:tight}

This section analyzes trihexes that contain no belts. We start with a specialization of a definition from \cite{deza2005zigzag}.

\begin{definition}
    A trihex is \emph{tight} if it does not contain any belts. 
\end{definition}

\begin{proposition} 
The following are equivalent for a trihex:

\begin{enumerate}[(i)]
\item The trihex is tight. 
\item The second coordinate in each of its three signatures is 0. 
\item For any fixed signature $(s_1, b_1, f_1)$, the three numbers $f_1$, $f_1+1$, and $s_1+1$ are pairwise relatively prime and $b_1 = 0$.
\end{enumerate}
\label{lemma:tight}
\end{proposition}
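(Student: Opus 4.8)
The plan is to prove the two equivalences (i) $\Leftrightarrow$ (ii) and (ii) $\Leftrightarrow$ (iii) separately. The equivalence (ii) $\Leftrightarrow$ (iii) is a direct arithmetic computation from the algorithms of Definition~\ref{def:altSigs}, whereas (i) $\Leftrightarrow$ (ii) requires a geometric argument in the covering tiling of Theorem~\ref{thm:allTrihexesComeFromTilings}.

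For (i) $\Leftrightarrow$ (ii), the key lemma I would establish is that every belt of a trihex lifts, under the covering map from the hexagonal tiling, to a bi-infinite straight chain of hexagons running in one of the three chain directions (vertical, SW to NE, or NW to SE). This is forced by the definition of a belt: each hexagon in a belt meets its two neighbors along opposite edges, and since opposite edges of a hexagon are parallel, the lifted hexagon centers are collinear, so the lifted chain cannot turn. A straight chain containing a special hexagon projects to a spine, while one containing no special hexagon projects to a belt; the belts arising in direction $i$ are exactly the $b_i$ belts recorded by the $i$-th signature. With this in hand, if some $b_i > 0$ then that signature already exhibits a genuine belt, so the trihex is not tight, giving (i) $\Rightarrow$ (ii) by contraposition. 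Conversely, if $b_1 = b_2 = b_3 = 0$, then in each direction the corresponding decomposition has no belts, so every straight chain of hexagons in that direction is a spine chain and contains a special hexagon; hence no straight chain projects to a belt, and by the key lemma the trihex has no belt at all and is tight, giving (ii) $\Rightarrow$ (i).

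For (ii) $\Leftrightarrow$ (iii), I would compute $b_2$ and $b_3$ from Definition~\ref{def:altSigs} together with the identity, established in Section~\ref{sec:howMany}, that all three signatures of a fixed trihex satisfy $(s_i+1)(b_i+1) = \tfrac{h}{2}+1 =: N$. Since $s_2 + 1 = j_2(b_1+1)$, where $j_2$ is the order of $f_1$ in $Z_{s_1+1}$, this identity gives $b_2 + 1 = N/(s_2+1) = (s_1+1)/j_2$, so $b_2 = 0$ exactly when $j_2 = s_1+1$, i.e.\ when $f_1$ has full order in $Z_{s_1+1}$, i.e.\ when $\gcd(f_1, s_1+1) = 1$. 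The identical computation with $j_3$ the order of $f_1 + b_1 + 1$ gives that $b_3 = 0$ exactly when $\gcd(f_1 + b_1 + 1,\, s_1+1) = 1$. Hence (ii), which asserts $b_1 = b_2 = b_3 = 0$, is equivalent to the conjunction of $b_1 = 0$, $\gcd(f_1, s_1+1) = 1$, and $\gcd(f_1+1, s_1+1) = 1$, where I have used $b_1 = 0$ to replace $f_1 + b_1 + 1$ by $f_1 + 1$. Because $\gcd(f_1, f_1+1) = 1$ holds automatically for consecutive integers, this conjunction says exactly that $b_1 = 0$ and that $f_1$, $f_1+1$, $s_1+1$ are pairwise relatively prime, which is (iii).

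The main obstacle is the key lemma of the (i) $\Leftrightarrow$ (ii) argument: I must argue that the correspondence between belts and straight chains is exhaustive, so that no belt escapes one of the three chain directions and the belts in direction $i$ are counted by $b_i$ with none left over. The ``no turning'' claim rests on collinearity propagating through the lift, and the counting claim relies on the fact, built into the construction of Section~\ref{sec:grid}, that in each direction every hexagon lies in a unique straight chain that projects either to a spine (when it meets a special hexagon) or to a belt, with the belt chains in direction $i$ projecting to precisely the $b_i$ signature belts. By contrast, the arithmetic of (ii) $\Leftrightarrow$ (iii) is routine once one invokes that the order of an element divides the group order.
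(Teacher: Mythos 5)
Your proposal is correct and takes essentially the same approach as the paper: (i) $\Leftrightarrow$ (ii) via the observation that every belt lifts to a straight chain of hexagons in one of the three directions of the covering tiling (so a belt-free tiling in all three directions forces tightness), and (ii) $\Leftrightarrow$ (iii) via the orders of $f_1$ and $f_1 + b_1 + 1$ in $Z_{s_1+1}$ from the algorithm of Definition~\ref{def:altSigs}. The only cosmetic difference is that you extract $b_i + 1 = (s_1+1)/j_i$ from the identity $(s_i+1)(b_i+1) = \tfrac{h}{2}+1$, whereas the paper computes $b_i$ from $h - 2s_i$ and the equality $s_1 = s_2 = s_3$; the content is identical.
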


\begin{proof}
$(i) \iff (ii)$: Suppose the trihex is tight. Then the second coordinates in each of its three signatures must be zero, since the second coordinate indicates the number of belts between spines. Conversely, if the second coordinate in each of its three signatures is 0, then in the hexagonal tiling that covers the trihex, there are no infinite strips of hexagons that do not contain special hexagons, either in the vertical direction, the NW to SE direction, or the SW to NE direction. But any belt in the trihex would lift to such a strip, so the trihex must be tight.

$(ii) \implies (iii)$: Consider a fixed signature $(s_1, b_1, f_1)$ and the alternate signatures $(s_2, b_2, f_2)$ and $(s_3, b_3, f_3)$. Since the number of hexagons $h = 2s_i b_i + 2s_i + 2b_i$ for $i = 1, 2, 3$, and $b_i = 0$, we have that $s_1 = s_2 = s_3$. In the algorithm given in Definition~\ref{def:altSigs}, $ s_2 = j_2(b_1 + 1) - 1$, where $j_2$ is the order of $f_1$ in $Z_{s_1 + 1}$. Since $b_1 = 0$ and $s_2 = s_1$, this means that $s_1 = j_2 - 1$, that is, $f_1$ has order $s_1 + 1$ in $Z_{s_1 + 1}$. Equivalently, $f_1$ is relatively prime to $s_1 + 1$. Similarly, $s_3 = j_3(b_1 + 1) - 1$, so $s_1 = s_3 = j_3 - 1$, where $j_3$ is the order of $f_1 + b_1 + 1$ in $Z_{s_1 + 1}$, that is, the order of $f_1 + 1$ in $Z_{s_1 + 1}$. So $f_1 + 1$ has order $s_1 + 1$ in $Z_{s_1 + 1}$ and is therefore relatively prime to $s_1 + 1$. Since $f_1$ and $f_1 + 1$ are relatively prime, we have that $f_1$, $f_1  +1$, and $s_1  + 1$ are pairwise relatively prime. 

$(iii) \implies (ii)$: Suppose $b_1 = 0$ and $f_1$, $f_1 + 1$, and $s_1 + 1$ are relatively prime. Since $b_1 = 0$, $h = 2s_1$. In Definition~\ref{def:altSigs}, since $f_1$ is relatively prime to $s_1 + 1$, $j_2 = s_1 + 1$. Therefore, $s_2 = j_2(b_1 + 1) - 1 = (s_1 + 1)(0 + 1) - 1 = s_1$, so $b_2 = \dfrac{h - 2s_2}{2s_2 + 2} = \dfrac{h - 2s_1}{2s_1 + 2} = b_1 = 0$. 

Furthermore, since $f_1 + b_1 + 1 = f_1 + 1$ is relatively prime to $s_1 + 1$, $j_3 = s_1 + 1$. So $s_3 = j_3(b_1 + 1) - 1 = (s_1 + 1)(0 + 1) - 1 = s_1$, and $b_3 = \dfrac{h - 2s_3}{2s_3 + 2} = \dfrac{h - 2s_1}{2s_1 + 2} = b_1 = 0$.

\end{proof}

Our next result proves Conjecture 5.3 from Deza and Dutour  \cite{deza2005zigzag}. We first repeat their definition of the graph of curvatures:

\begin{definition}
 The \emph{graph of curvatures} of a trihex is the graph whose vertices are the four triangular faces. Two vertices $c$ and $d$ are connected by an edge if there exists
a pseudo-road connecting the faces $c$ and $d$. A \emph{pseudo-road} is a sequence of hexagons, say
$a_1, \ldots, a_\ell$, such that setting $a_0 = c$ and $a_{\ell+1} = d$, we have that for $1 \leq i \leq \ell$,  $a_i$,is adjacent
to $a_{i-1}$ and $a_{i+1}$ on opposite edges.
\end{definition}

\begin{proposition}
The graph of curvatures of any tight trihex is a complete graph on 4 vertices. 
\end{proposition}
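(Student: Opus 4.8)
The plan is to work in the covering hexagonal tiling and translate the statement into a computation in $\Lambda/2\Lambda$, where $\Lambda$ is the lattice of centres of the special hexagons. The first observation is that a pseudo-road is precisely a spine (Definition~\ref{def:spine}): a chain of hexagons in which consecutive members meet along opposite edges, capped by two triangles. By the characterisation of tightness (Proposition~\ref{lemma:tight}), a tight trihex has $b=0$ and spines of length $s$ in each of the three directions --- vertical, SW--NE, and NW--SE --- so genuine pseudo-roads exist in all three directions. Each such spine is the image, under the quotient map, of a maximal run of hexagons between two consecutive special hexagons along one of the three lattice directions, and it joins the two triangles covered by those two special hexagons. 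Thus an edge of the graph of curvatures is produced for each pair of triangles arising this way, and the whole problem reduces to identifying, for each direction, which pair of triangle-classes is joined.

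Next I would set up the identification of the four triangles with $\Lambda/2\Lambda \cong (\mathbb{Z}/2)^2$. The covering group is generated by $180^\circ$ rotations about the points of $\Lambda$; its translation subgroup is $2\Lambda$, and the rotocentres fall into exactly four classes modulo the group, namely the four cone points, which are exactly the four triangles (here $-q \equiv q \bmod 2\Lambda$, so rotations cause no further identification). Writing $\vec{n}$, $\vec{e}$, $\vec{e}\,'=\vec{e}-\vec{n}$ for the one-hexagon steps in the vertical, SW--NE and NW--SE directions, and taking the signature $(s,0,f)$, we have $\Lambda=\langle \vec{u},\vec{v}\rangle$ with $\vec{u}=\vec{e}+f\vec{n}$ and $\vec{v}=(s+1)\vec{n}$. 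I would then compute the vector joining two consecutive special hexagons in each direction. Along the vertical direction this is $\vec{v}$. Along SW--NE it is the least positive multiple $t\vec{e}\in\Lambda$; since $tf\equiv 0 \pmod{s+1}$ and $\gcd(f,s+1)=1$ (tightness), the least such $t$ is $s+1$, giving $(s+1)\vec{e}=(s+1)\vec{u}-f\vec{v}$. Likewise along NW--SE one gets $(s+1)\vec{e}\,'=(s+1)\vec{u}-(f+1)\vec{v}$, using $\gcd(f+1,s+1)=1$.

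The decisive step is the reduction mod $2\Lambda$. In the basis $(\vec{u},\vec{v})$ the three joining vectors have coordinates $(0,1)$, $(s+1,-f)$ and $(s+1,-(f+1))$. Here I would invoke the parity consequence of tightness: since $s+1$ is coprime to both $f$ and $f+1$, and exactly one of $f,f+1$ is even, $s+1$ must be odd. Hence modulo $2$ the three vectors become $(0,1)$, $(1,\,f\bmod 2)$ and $(1,\,(f+1)\bmod 2)$, and because $f$ and $f+1$ have opposite parity these are precisely the three distinct nonzero elements of $(\mathbb{Z}/2)^2$. A spine in a given direction joins a triangle-class $c$ to $c+g$, where $g$ is the corresponding nonzero element, so as the starting special hexagon varies it realises the entire perfect matching $\{c,c+g\}$ of the involution ``add $g$''. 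The three nonzero elements give the three perfect matchings of $K_4$, whose union is all six edges; therefore every pair of triangles is joined and the graph of curvatures is $K_4$.

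The main obstacle is this reduction together with its supporting geometry: one must know that in the two diagonal directions the first special hexagon reached is at distance exactly $s+1$ (which is where the two coprimality conditions enter, and is equivalent to the spine length being $s$), and one must extract the parity fact $2\nmid(s+1)$ that makes the three classes distinct and nonzero. A minor technical point to dispatch is that each run of hexagons between consecutive special hexagons projects to a genuine spine with distinct faces; this holds because no special hexagon lies strictly between two consecutive ones, so no interior hexagon of the run is fixed or identified by the rotation group. The degenerate case $s=0$ (the tetrahedron) is covered by the same computation, since $s+1=1$ is odd.
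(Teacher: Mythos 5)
Your proof is correct and follows essentially the same route as the paper's: both lift to the covering hexagonal tiling, use the tightness characterization to get $\gcd(f,s+1)=\gcd(f+1,s+1)=1$ and hence $s+1$ odd, and then check that the vertical, SW--NE, and NW--SE spines each join a given triangle to a different one of the other three. Your $\Lambda/2\Lambda\cong(\mathbb{Z}/2)^2$ bookkeeping with the three perfect matchings is just a more algebraic packaging of the paper's ``alternate columns plus equilateral triangle'' argument, and the supporting details you flag (the four cone classes, the distance $s+1$ to the next special hexagon in each diagonal direction, the parity of $f$ versus $f+1$) all check out.
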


\begin{proof} 
Consider a tight trihex with signature $(s_1, b_1, f_1)$. Recall that the trihex arises as the quotient sphere from a hexagonal tiling of the plane, where the ``special hexagons'',  which correspond to the four triangles, lie on the vertices of a superimposed parallelogram grid. Label these special hexagons $A$, $B$, $C$, and $D$ as in Figure~\ref{fig:hexagonalGridForTightTrihex}, so that hexagons labeled $A$ and $C$ lie in vertical columns, and hexagons labeled $B$ and $D$ lie in alternate vertical columns. 

\begin{figure}
    \centering
    \includegraphics[height = 8 cm]{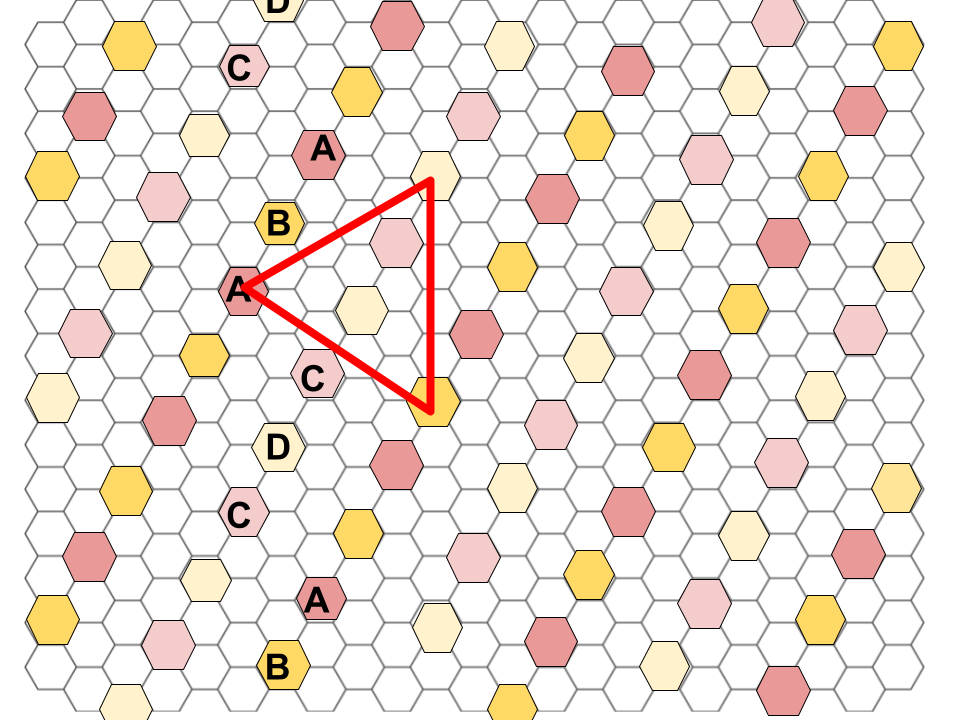}
    \caption{Hexagonal cover for a tight trihex.}
    \label{fig:hexagonalGridForTightTrihex}
\end{figure}

By Proposition~\ref{lemma:tight}, $b_1 = 0$ and $f_1$ and $f_1 + 1$ are both relatively prime to $s_1 + 1$. Therefore, $s_1 + 1$ must be odd. In Definition~\ref{def:altSigs}, $j_2$ and $j_3$, which are the orders of $f_1$ and $f_1 + 1$ in $Z_{s_1 + 1}$, respectively, must both equal $s_1 + 1$ and therefore also be odd. Since the columns of hexagons alternate between columns containing $A$ and $C$ and columns containing $B$ and $D$, if we translate a special hexagon $A$ in the SW to NE direction by $j_2 = s_1 + 1$ columns, it will hit a special hexagon in the $B/D$ column. Similarly, if we translate $A$ in by $j_3 = s_1 + 1$ columns in the NW to SE direction, it will hit a special hexagon in the $B/ D$ column. Form a triangle with vertices at the centers of the original special hexagon $A$ and these two special hexagons. The sides with a vertex in $A$ both have the same length and are at a $60^\circ$ angle from each other. Therefore, this triangle must be an equilateral triangle. So the two special hexagons in the $B/D$ column must be $s_1$ hexagons apart, so one of them is a hexagon $B$ and the other a hexagon $D$. Therefore, the graph of curvatures for the trihex must include an edge from $A$ to $B$ and an edge from $A$ to $D$.  Of course, if we translate a special hexagon $A$ straight north or south, it will hit a special hexagon $C$, so the graph of curvatures also includes an edge from $A$ to $C$. By repeating this argument starting with hexagons $B$, $C$, and $D$, in turn, instead of $A$, we see that each vertex in the graph of curvatures connects to every other vertex. 
\end{proof}

\section*{Acknowledgments}

We would like to express our appreciation to our colleague Bob Proctor for excellent suggestions and edits to an early version of this manuscript.

\section*{Statements and Declarations}

\subsection*{Funding}

The authors declare that no funds, grants, or other support were received during the preparation of this manuscript.

\subsection*{Competing Interests}
The authors have no relevant financial or non-financial interests to disclose. 

\subsection*{Author Contributions}

Both authors contributed to the research and read and approved the final manuscript.

 \subsection*{Data Availability}
 
 All data generated or analyzed during this study are included in this published article. Python code used to generate this data is available from the corresponding author on request.

\printbibliography
 
\end{document}